\documentclass[11pt,oneside,english]{amsart}
\openup 2pt
\usepackage{amssymb}

\relpenalty=10000 \binoppenalty=10000 \uchyph=-1 \flushbottom
\topmargin 5pt \headsep 15pt \oddsidemargin 0pt \evensidemargin
0pt \textheight 9.1in \textwidth 6.5in

\makeatletter \theoremstyle{plain}
 \newtheorem{thm}{Theorem}[section]

 \newtheorem{prop}[thm]{Proposition}
 \newtheorem{corollary}[thm]{Corollary}
 \numberwithin{equation}{section} 
 \numberwithin{figure}{section} 
 \theoremstyle{plain}
 \theoremstyle{definition}
 \newtheorem{defn}[thm]{Definition}
 \newtheorem{rem}[thm]{Remark}
\newcommand{\A}{{{\mathcal A}}}

\newcommand{\calS}{{{\mathcal S}}}
\newcommand{\calG}{{{\mathcal G}}}
\newcommand{\h}{{{\mathfrak H}}}

\newcommand{\E}{{{\mathcal S}}}
\newcommand{\calT}{{{\mathcal T}}}
\newcommand{\V}{{{\mathcal V}}}
\newcommand{\C}{{{\mathbb C}}}

\newcommand{\R}{{{\mathbb R}}}
\newcommand{\calR}{{{\mathcal R}}}
\newcommand{\J}{{{\mathbb J}}}

\usepackage{babel}
\makeatother

\begin{document}

\title [Straight Ruled surfaces]{Straight ruled surfaces in the heisenberg group}

\author{Ioannis D. Platis}

\address {Department of Mathematics,  University of Crete, Knossos Avenue, 71409 Heraklion Crete, Greece.}

\subjclass{53C17, 49Q05}
\keywords{Heisenberg geometry, horizontally minimal surfaces}

\email{jplatis@math.uoc.gr}

\begin{abstract}
We generalise a result of Garofalo and Pauls: a horizontally minimal smooth surface embedded in the Heisenberg group is locally a straight ruled surface, i.e. it consists of straight lines tangent to a horizontal vector field along a smooth curve. We show additionally that any horizontally minimal surface is locally contactomorphic to the complex plane.
\end{abstract}

\maketitle

\section{Introduction}

A ruled surface in the Heisenberg group $\h$ is a surface which is foliated by  geodesics of the Carnot--Caratheodor\'y metric $d_{cc}$ in $\h$. These geodesics are the rulings of the surface, and when they are Euclidean straight lines we call the ruled surface {\it straight}. The class of Heisenberg ruled surfaces is the analogue of its Euclidean  counterpart; it is a classical theorem of elementary differential geometry of surfaces that ruled surfaces embedded in $\R^3$ have vanishing Gaussian curvature $K$ and are locally isometric to the  plane. Moreover, every sufficient small portion of  a surface which is locally isometric to the plane is a generalised cylinder, or a generalised cone or a tangent developable, see for instance \cite{DC}. 

A smooth surface $\calS$ embedded in $\h=\R^3$ inherits a sub--Riemannian structure from the one of $(\h,d_{cc})$; this is described by the horizontal normal vector field $\nu_\calS$ on the surface. Points of the surface where $\nu_{\calS}$ can not be defined are called characteristic and the set of these points form the characteristic locus of $\calS$. The pull--back of the contact form $\omega$ of $\h$ defines a $1-$form $\omega_\calS$ in $\calS$ and two surfaces $\calS$ and $\tilde\calS$ are called locally contactomorphic if there exists a local diffeomorphism $f:\calS\to\tilde\calS$ away from the characteristic loci so that $f^*\omega_{\tilde\calS}=\lambda\omega_\E$. Such contactomorphisms between surfaces are the sub--Riemannian analogues of local isometries in the Euclidean case. A notion of mean curvature, the horizontal mean curvature $H^h$, is defined in non characteristic points of $\calS$ in terms of the derivatives of the components of $\nu_\calS$: if $X$ and $Y$ are the horizontal vector fields of $\h$ and $\nu_\calS=\nu_1X+\nu_2Y$ then
$$
H^h=X\nu_1+Y\nu_2.
$$  
Surfaces with vanishing horizontal mean curvature are called $H-$minimal.
In \cite{GP}, Garofalo and Pauls proved the following result concerning surfaces in $\h$ which are graphs of functions over the $xy-$plane (Corollary 5.3):

\medskip

\noindent {\bf Theorem.} {\it If $\calS$ is a portion of a $C^2-$ surface $S$ which is a graph of a function over the $xy-$plane in $\h$ with non characteristic points, then it is $H-$minimal if and only if it is a piece of a ruled surface whose  rulings are straight lines (i.e. astraight ruled surface).}  

\medskip 

In this article, we consider arbitrary smooth surfaces (not necessarily graphs) embedded in $\h$, see Section \ref{sec:surfaces} for details. Our main theorem is the following version of Garofalo--Pauls' result: 
\begin{thm}\label{thm:main}
{\it Straight ruled surfaces} have zero horizontal mean curvature and are all locally contactomorphic to the complex plane. Moreover, if a surface $\calS$ has everywhere zero horizontal mean curvature, then every sufficiently small portion of $\calS$ comprising only of non characteristic points is a straight ruled surface.
\end{thm}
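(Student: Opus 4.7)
The plan is to introduce, on any non-characteristic portion of a surface, an angle function $\theta$ that records the direction of the horizontal tangent line in the moving frame $(X,Y)$ of $\h$, and to reduce both directions of the theorem to a single identity $H^h=-V(\theta)$, where $V=\cos\theta\,X+\sin\theta\,Y$ is the unit horizontal tangent vector field. The key elementary input is the following: a Euclidean straight line through $p=(x,y,t)$ with direction $(a,b,c)$ is horizontal in $\h$ if and only if $c=\tfrac{1}{2}(bx-ay)$, and the velocity of such a line, expressed in the frame $(X,Y,T)$ based at the moving point, simplifies to $aX+bY$ at every point of the line. Consequently, along any horizontal straight line the $(X,Y)$-angle $\theta$ is constant, and a short ODE computation shows the converse: any horizontal curve along which $\theta$ is constant is a Euclidean straight line.

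For the first half of the theorem, parametrise the straight ruled surface as $\Phi(s,r)=\gamma(s)+rV(s)$, where $\gamma$ is a smooth curve in $\h$ and $V(s)$ is the Euclidean direction of the horizontal ruling at $\gamma(s)$. By the preceding observation the horizontal tangent to $\calS$ at $\Phi(s,r)$ is the same vector $V(s)$, now expressed in the frame at $\Phi(s,r)$, so $\theta$ depends only on $s$ and the horizontal normal is $\nu_\calS=-\sin\theta\,X+\cos\theta\,Y$. A direct computation with any smooth extension of $\theta$ to a neighbourhood of $\calS$ gives
\[
H^h=X(-\sin\theta)+Y(\cos\theta)=-\cos\theta\,(X\theta)-\sin\theta\,(Y\theta)=-V(\theta),
\]
and this quantity is intrinsic to $\calS$ because $V$ is tangent to $\calS$; as $\theta$ is constant along each ruling (which is an integral curve of $V$), one concludes $H^h\equiv 0$.

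For the converse, on a small non-characteristic piece of $\calS$ let $\theta$ denote the angle of the horizontal tangent direction. The identity $H^h=-V(\theta)$ derived above depended only on $V$ being tangent to the surface, hence applies again, and the hypothesis $H^h\equiv 0$ yields $V(\theta)=0$. Thus $\theta$ is constant along each integral curve of $V$, and by the key observation such curves are Euclidean straight horizontal lines. Taking any smooth curve $\gamma$ on $\calS$ transverse to $V$ and parametrising $\calS$ by $\Phi(s,r)=\gamma(s)+rV(s)$ exhibits $\calS$ locally as a straight ruled surface.

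The local contactomorphism to the complex plane follows from either parametrisation: a direct expansion of $\Phi^*\omega$ shows that its $dr$-coefficient vanishes (rulings are horizontal), so $\omega_\calS$ is proportional to $ds$ and its kernel defines a smooth nowhere vanishing line field on $\calS$. The same is true for $\omega$ pulled back to the $xy$-plane identified with $\C$, and any two such line fields on $2$-dimensional surfaces are locally identified by a flow-box diffeomorphism, producing the required map $f$ with $f^*\omega_\C=\lambda\omega_\calS$. The main technical point is the verification that $H^h=X\nu_1+Y\nu_2$ truly reduces to $-V(\theta)$ independently of the extension of $\nu$, which hinges on $V$ being tangent to $\calS$; once this intrinsic formula is in place, the forward direction is a one-line computation and the converse reduces to the elementary ODE integration asserted in the first paragraph.
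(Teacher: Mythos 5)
Your argument is correct and is, at its core, the paper's argument in different clothing: the identity $H^h=-V(\theta)$, with $V$ the unit horizontal tangent and $\theta$ its turning angle, is precisely Proposition \ref{prop:mean-sec} ($H^h$ equals the signed curvature of the projected leaf of the horizontal flow, and signed curvature is the derivative of the turning angle along that leaf), and your converse --- integrate $V(\theta)=0$ to get constant-angle horizontal curves, hence Euclidean lines, then parametrise over a transversal --- is the same integration the paper performs via $d^2x_s/dv^2=d^2y_s/dv^2=0$ and the horizontality ODE for $t_s$. The genuine differences are these. First, you use the single identity $H^h=-V(\theta)$ for both implications, whereas the paper handles the forward direction separately (through Proposition \ref{prop:main}, or through Propositions \ref{prop:straight-contact} and \ref{prop:contact-minimal}); your route is shorter and makes the equivalence transparent, and you correctly isolate the one delicate point, namely that $X\nu_1+Y\nu_2$ is independent of the extension of $\nu$ off $\calS$ because it collapses to a derivative along the tangent field $V$. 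Second, for the contactomorphism you use a soft flow-box identification of the foliations $\ker\omega_\calS$ and $\ker\omega_\C$, where the paper writes the map explicitly by parametrising $\C$ as a degenerate ruled surface; under Definition \ref{def:contact} your argument is valid, but observe that it never uses straightness --- it shows that \emph{any} two regular surface germs away from their characteristic loci are locally contactomorphic --- so the paper's explicit map carries more information. Two small repairs: with the normalisation $\omega=dt+2(x\,dy-y\,dx)$ used here, the line through $(x,y,t)$ with direction $(a,b,c)$ is horizontal iff $c=2(ya-xb)$, not $c=\tfrac12(bx-ay)$ (you have imported a different convention for the group law); and when you pass from ``integral curve of $V$'' to ``affine line in $\R^3$'' you should note that the $T$-component of $V$, viewed as a vector of $\R^3$, equals $2(ya-xb)$ and is constant along each such curve, so the flow parameter really is an affine parameter.
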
  
The method of proof is elementary and may be summarised as follows. The kernel of the induced contact form $\omega_\E$ of $\E$ is the vector field $\mathbb{J}\nu_\E\in T(\E)$, where $\mathbb{J}$ is the natural complex structure acting naturally on the horizontal bundle of $\h$. Therefore, away from characteristic points there is a foliation of $\E$ by horizontal surface curves (the horizontal flow of $\E$). It is proved (see Proposition \ref{prop:mean-sec} and also \cite{CDPT}) that the horizontal mean curvature $H^h(p)$ at a non characteristic point of $\S$ is equal to $\kappa_s(p')$, where $p'$ is $pr_\C(p)$ and $\kappa_s(p')$ is the signed curvature of the plane curve which is the projection to $\C$ of the leaf of the horizontal flow passing from $p$. Now , a surface in $\h$ which is locally contactomorphic to the complex plane must have $H^h=0$ (see Proposition \ref{prop:contact-minimal}) and straight ruled surfaces share this property (Proposition \ref{prop:straight-contact}.) On the other hand, in an $H-$minimal surface all  projected curves of the horizontal flow are straight lines and the only option 
for a sufficiently small portion of $\calS$ containing an arbitrary non characteristic point $p$ is to be a straight ruled surface, (see proof of Theorem \ref{thm:main} in Section \ref{sec:ruled}).

Next, we give two examples of classes of smooth surfaces with empty characteristic locus. The first one is that of the horizontal tangent developables, see Section \ref{sec:hor-developables}. The second is that of surfaces which besides empty characteristic locus, also have closed induced contact form. We prove the following:

\medskip

\noindent {\bf Proposition \ref{prop:closed}.} {\it Smooth surfaces in $\h$ with empty characteristic locus and closed induced contact form are exactly the planes which are perpendicular to the complex plane} $\C$. 

\medskip

This result is comparable to the next Bernstein type Theorem, see \cite{DGNP3}:

\medskip

\noindent {\bf Theorem.}  {\it The only stable $C^2-$minimal entire graphs in $\h$ with empty characteristic locus, are the vertical planes}
$$
\Pi=\{(x,y,t)\in\h\;|\;ax+by=c,\;\; a,b,c\in\R\}.
$$ 

\medskip

Stability here is in the sense that every compact subset of a surface $\E$ minimises the horizontal area (or perimeter) up to the second order; for details see \cite{DGNP2} and \cite{DGNP3}.

There is a quite large bibliography in $H-$minimal surfaces. Illustratively, a characterisation of  minimal surfaces in terms of a subelliptic PDE may be found in \cite{P};  Benstein type problems are addressed (and solved) in  \cite{DGNP1}, \cite{DGNP2}, \cite{DGNP3}, \cite{GP}, \cite{I}. More general results may be also found in \cite{CHMY}.

This paper is organised as follows. In Section \ref{sec:prel} we discuss in brief the Heisenberg group and its sub--Riemannian geometry. In Section \ref{sec:elements} we set up the environment of our work, discussing regular surfaces in $\h$ and elements of their horizontal geometry. In Section \ref{sec:ruled} we discuss straight ruled surfaces and prove our main theorem and finally, surfaces with empty characteristic locus are presented in Section \ref{sec:empty}.

\section{The Heisenberg Group}\label{sec:prel}
The material of this section is standard; we refer the reader for instance to \cite{CDPT}, \cite{G} and \cite{KR}. 
The Heisenberg group $\h$  is the set $\R^2\times\R$ with the group law
$$
(x,y,t)*(x',y',t')=\left(x+x',y+y',t+t'+2(yx'-xy')\right),
$$
and it is a two--step nilpotent Lie group with underlying manifold $\R^2\times\R$. Consider the left invariant vector fields
\begin{eqnarray*}
X=\frac{\partial}{\partial x}+2y\frac{\partial}{\partial t},\quad Y=\frac{\partial}{\partial y}-2x\frac{\partial}{\partial t},
\quad T=\frac{\partial}{\partial t}.
\end{eqnarray*}
The Lie algebra of left invariant vector fields of $\h$ has a grading $\mathfrak{h} = \mathfrak{v}_1\oplus \mathfrak{v}_2$ with
\begin{displaymath}
\mathfrak{v}_1 = \mathrm{span}_{\R}\{X, Y\}\quad \text{and}\quad \mathfrak{v}_2=\mathrm{span}_{\R}\{T\}.
\end{displaymath}
The contact form $\omega$ of $\h$ is defined as the unique 1--form satisfying $X,Y\in{\rm ker}\omega$, $\omega(T)=1$. Uniqueness here is modulo change of coordinates as it follows by the Darboux Theorem. The distribution in $\h$ defined by the first layer $\mathfrak{v}_1$ is called the {\it horizontal distribution}. 
In Heisenberg coordinates $x,y,t,$ the contact form $\omega$ is given by
\begin{eqnarray*}
\omega=dt+2(xdy-ydx).
\end{eqnarray*}

There are two natural metrics defined on $\h$; the first arises from the  Kor\'anyi gauge which is given by
$$
\left|(x,y,t)\right|_\h=\left| |x+iy|^2-it\right|^{1/2}.
$$
The {\it Kor\'anyi--Cygan}  metric $d_\h$ is derived from it on $\h$, and is defined by the relation
$$
d_\h\left((x_1,y_1,t_1),\,(x_2,y_2,t_2)\right)
=\left|(x_1,y_1,t_1)^{-1}*(x_2,y_2,t_2)\right|.
$$
The {\it sub--Riemannian metric} $\langle\cdot,\cdot\rangle$ on $\h$ is given in the horizontal subbundle by the following relations:
\begin{equation}\label{eq:submetric}
 \langle X,X\rangle=\langle Y,Y\rangle=1,\quad \langle X,Y\rangle=\langle Y,X\rangle=0,
\end{equation}
and the induced norm shall be denoted by $\|\cdot\|$. The geodesics of this metric form the {\it Legendrian foliation} of $\h$ i.e. the foliation of $\h$ by horizontal curves. An (in general) absolutely continuous curve $\gamma:[a,b]\to \h$ (in the Euclidean sense) with 
\begin{displaymath}
\gamma(\tau)=(x(\tau),y(\tau),t(\tau))\in\h
\end{displaymath}
 is called {\it horizontal} if
\begin{displaymath}
 \dot{\gamma}(\tau)\in {\rm H}_{\gamma(t)}(\h)\quad \text{for almost every}\;\tau\in [a,b],
\end{displaymath}
or equivalently if
$$
\dot t(\tau)=2(y(\tau)\dot x(\tau)-x(\tau)\dot y(\tau)).
$$For a horizontal curve $\gamma$,
$$
\ell(\gamma)=\int_a^b\|\dot\gamma_h(\tau)\|d\tau=\int_a^b\sqrt{\langle\dot\gamma(\tau),X_{\gamma(\tau)}\rangle^2+\langle\dot\gamma(\tau),Y_{\gamma(\tau)}\rangle^2}d\tau
$$
and the {\it Carnot--Caratheodor\'y distance} of two arbitrary points $p,q\in\h$ is 
$$
d_{cc}(p,q)=\inf_\gamma \ell(\gamma)
$$
where $\gamma$ is a horizontal curve joining $p$ and $q$. It is proved  that the Kor\'anyi--Cygan and Carnot--Caratheodor\'y metrics generate the same infinitesimal structure and morover, the isometry groups of $(\h,d_\h)$ and $(\h,d_{cc})$ are the same.

\section{Elements of Horizontal Geometry of Surfaces in $\h$}\label{sec:elements}
In this section we define regular surfaces in the Heisenberg group  $\h$ and their horizontal normal vector field  (Section \ref{sec:surfaces}). Regular surfaces induce a contact structure from $\h$; we study this structure in Section \ref{sec:form}, in fact we comment on (local) contactomorphisms between surfaces and the horizontal flow of a regular surface (that is the foliation of the surface by horizontal surface curves). Finally, in Section \ref{sec:hormean} we define the horizontal mean curvature of a regular surface and prove that $H-$minimal regular surfaces are locally contactomorphic to the plane.
  
\subsection{Regular Surfaces--Horizontal Normal Vector Field}\label{sec:surfaces}

By a regular surface $\E$ embedded in the Heisenberg group $\h$  we shall always mean an {\it oriented regular surface of} $\R^3$, see \cite{DC}, i.e. a countable collection of surface patches $\sigma_\alpha:U_\alpha\rightarrow V_\alpha\cap\R^3$ where $U_\alpha$ and $V_\alpha$ are open sets of $\R^2$ and $\R^3$ respectively, such that
\begin{enumerate}
 \item each $\sigma_\alpha$ is a smooth homeomorphism, and
\item the differential $(\sigma_\alpha)_*:\R^2\rightarrow\R^3$ is of rank 2 everywhere.
\end{enumerate}
Let $\E:U\rightarrow\R^3$ be a regular surface and suppose that a surface patch $\sigma$ is defined in an open domain $U\subset\R^2$ by
$$
\sigma(u,v)=(x(u,v),y(u,v),t(u,v))
$$
so that its differential $\sigma_*$ is of rank 2. The tangent plane $T_\sigma(\E)$ of $\E$ at $\sigma$ is
$$
T_\sigma(\E)={\rm span}\left\{\sigma_u=\sigma_*\frac{\partial}{\partial u},\sigma_v=\sigma_*\frac{\partial}{\partial v}\right\}
$$
which is also defined by the normal vector
\begin{eqnarray*}
N_\sigma=\sigma_u\wedge\sigma_v
=\frac{\partial(y,t)}{\partial(u,v)}\frac{\partial}{\partial x}
+\frac{\partial(t,x)}{\partial(u,v)}\frac{\partial}{\partial y}
+\frac{\partial(x,y)}{\partial(u,v)}\frac{\partial}{\partial t},
\end{eqnarray*}
where $\wedge$ is the exterior product in $\R^3$. That is
$$
T_\sigma(\E)=\{V_\sigma\in T_\sigma(\R^3)\;:\;N_\sigma\cdot V_\sigma=0\}
$$
where the dot is the usual Euclidean product in $\R^3$. The unit normal vector field of $\nu_\E$ of $\E$ is uniquely defined at each local chart by the relation
$$
\nu_\sigma=\frac{\sigma_u\wedge\sigma_v}{|\sigma_u\wedge\sigma_v|}
$$ 
where $|\cdot|$ is the Euclidean norm in $\R^3$.
\medskip

\begin{defn}\label{defn-hornorspace}
Let $\E$ be a regular surface and $p\in\E$. The {\sl horizontal plane} $\mathbb{H}_p(\E)$ of $\E$ at $p$ is the horizontal plane $\mathbb{H}_p(\h)$.
\end{defn}

\medskip

For arbitrary $p\in\E$, we wish to find the relation between the horizontal  plane $\mathbb{H}_p(\E)$ and the tangent plane $T_p(\E)$. We begin by defining a suitable wedge product.

\begin{defn}
For $p\in\h$, the Heisenberg wedge product  $\wedge^\h_p$ is a mapping $T_p(\h)\times T_p(\h)\to T_p(\h)$ which assigns to each two vectors
\begin{eqnarray*}
 &&
a=a_1X+a_2Y+a_3T,\quad\text{and}\quad b=b_1X+b_2Y+b_3T
\end{eqnarray*}
of  $T_p(\h)$ the  vector $a\wedge^\h b\in T_p(\h)$ which is  given by the formal determinant
\begin{equation*}
a\wedge^\h b=\left|\begin{matrix}
                 X&Y&T\\
a_1&a_2&a_3\\
b_1&b_2&b_3     \end{matrix}\right|=\left|\begin{matrix}
a_2&a_3\\
b_2&b_3\end{matrix}\right|X+\left|\begin{matrix}
a_3&a_1\\
b_3&b_1\end{matrix}\right|Y+\left|\begin{matrix}
a_1&a_2\\
b_1&b_2\end{matrix}\right|T.
\end{equation*}
 
\end{defn}
Obviously $a\wedge^\h b=-b\wedge^\h a$ and the following clock rule holds.
\begin{equation*}\label{clock}
 X\wedge^\h Y=T,\quad Y\wedge^\h T=X, \quad T\wedge^\h X=Y.
\end{equation*}

\medskip

Thus defined, this wedge product leads to the following.

\medskip

\begin{defn}
 If $\sigma:U\rightarrow\R^3$ is a surface patch of a regular surface $\E$, the {\sl horizontal normal} $N^h_\sigma$ to $\sigma$ is the horizontal part of
$$
\sigma_u\wedge^\h \sigma_v=\sigma_*\frac{\partial}{\partial u}\wedge^\h \sigma_*\frac{\partial}{\partial v},
$$
that is
\begin{equation}\label{hornor2}
N_\sigma^h=(\sigma_u\wedge^\h \sigma_v)^h=\sigma_u\wedge^\h \sigma_v-\omega\left(\sigma_u\wedge^\h \sigma_v\right)T.
\end{equation}
The {\sl unit horizontal normal} $\nu^h_\sigma$ to $\sigma$ is
\begin{equation}\label{unithornor2}
\nu^h_\sigma=\frac{N^h_\sigma}{\|N^h_\sigma\|} 
\end{equation}
where $\|\cdot\|$ denotes the norm of the product $\langle,\cdot,\rangle$ in $\mathbb{\h}$ (recall that $\|X\|=\|Y\|=1$ and $\langle X,Y\rangle=0$). We have
\begin{equation}\label{unithornor}
 \nu^h_\sigma=\frac{(\sigma_u\wedge^\h \sigma_v)^h}{\|(\sigma_u\wedge^\h \sigma_v)^h\|}.
\end{equation}
\end{defn}

\medskip

Observe that  $N^h_\sigma$ is {\sl not} the horizontal part of $N_\sigma$. Simple calculations induce the following explicit formula:
\begin{equation}\label{defn-hornor}
N^h_\sigma=\left(\frac{\partial(y,t)}{\partial(u,v)}+2y\frac{\partial(x,y)}{\partial(u,v)}\right)X+
\left(\frac{\partial(t,x)}{\partial(u,v)}-2x\frac{\partial(x,y)}{\partial(u,v)}\right)Y.
\end{equation}

\medskip

From its very definition, it is immediately derived that the horizontal normal $N_p^h$ at a point $p\in\E$ depends on the choice of the surface patch in the following way: suppose that $(U,\sigma)$ and $(\tilde U,\tilde\sigma)$ are two overlapping patches at $p$. Then if $\Phi=\sigma^{-1}\circ\tilde\sigma$ is the transition mapping, we may find from \ref{hornor2}  that around $p$ we have
$$
N_{\tilde\sigma}^h={\rm det}(\Phi)N_\sigma^h,
$$
where ${\rm det}(\Phi)>0$ since we have already presupposed that $\E$ is oriented. At this point, we would have been ready to define the unit horizontal normal vector field in $\E$ in accordance with the unit normal vector field which is defined everywhere in a regular surface embedded into Euclidean space, but there is no assurance that a) $N_p^h\neq 0$ at all $p\in\E$ and b) $\nu^h_p$ is not in $T_p(\E)$. To this end we give the following definition.

\medskip

\begin{defn}
Let $\E$ be a regular surface. A point $p\in\E$ is called {\sl non characteristic} if $N_p^h\neq 0$. The set of characteristic points
\begin{equation*}
\mathfrak{C}(\E)=\{p\in\E\;|\;N_p^h=0\}
\end{equation*}
is called the {\sl characteristic locus} of $\E$.
\end{defn}

\medskip

By definition, the points of $\mathfrak{C}(\E)$ are given in a local chart $(U,\sigma)$ by the equations
\begin{equation*}
\frac{\partial(y,t)}{\partial(u,v)}+2y\frac{\partial(x,y)}{\partial(u,v)}=0\quad\text{and}\quad
\frac{\partial(t,x)}{\partial(u,v)}-2x\frac{\partial(x,y)}{\partial(u,v)}=0,
\end{equation*}
and therefore the Lebesgue measure of $\mathfrak{C}(\E)$ is 0 or 1. An equivalent, but not depending on coordinates  definition of the characteristic locus will be given in the next section. It remains to show that at non characteristic points of $\E$,  $\nu^h_p$ is not in $T_p(\E)$:



\medskip

\begin{prop}\label{prop-nu-nuh}
A point  $p=(x,y,t)\in\E$ is non characteristic if and only if $N_p\cdot N_p^h\neq 0$, (where the dot  denotes the Euclidean product in $\R^3$). Moreover, $N_p=N_p^h$ as vectors in $\R^3$ if and only if $x=y=0$ or $\partial(x,y)=0$. In this case,
$$
N_p=(\partial(y,t),\partial(t,x),0),\quad N_p^h=\partial(y,t)X+\partial(t,x)Y,\quad\text{and}\quad |N_p|=\| N^h_p\|
$$
and the surface at $p$ is tangent to a plane passing through $p$ which is orthogonal to the complex plane.
\end{prop}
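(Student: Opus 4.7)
The plan is to settle both assertions by a single coordinate computation using the explicit formula \eqref{defn-hornor} for $N^h_\sigma$, plus the identification $X = \partial_x + 2y\partial_t$, $Y = \partial_y - 2x\partial_t$, which allows us to read off $N^h_p$ as a Euclidean vector in $\R^3$. Abbreviate $A=\partial(y,t)/\partial(u,v)$, $B=\partial(t,x)/\partial(u,v)$, $C=\partial(x,y)/\partial(u,v)$, so that $N_p=(A,B,C)$ and
\begin{equation*}
N^h_p \;=\; (A+2yC)\,X + (B-2xC)\,Y \;=\; \bigl(A+2yC,\; B-2xC,\; 2y(A+2yC)-2x(B-2xC)\bigr)
\end{equation*}
in $\R^3$.

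For the first equivalence, I would simply compute $N_p\cdot N_p^h$ using the Euclidean product in $\R^3$ and the expressions above. The cross terms cancel in a neat way, and the result collapses to
\begin{equation*}
N_p\cdot N_p^h \;=\; (A+2yC)^2 + (B-2xC)^2 \;=\; \|N_p^h\|^2,
\end{equation*}
where the last equality is immediate from $\|X\|=\|Y\|=1$ and $\langle X,Y\rangle=0$. Thus $N_p\cdot N_p^h\neq 0$ if and only if $N_p^h\neq 0$, which is the very definition of $p$ being non characteristic.

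For the second assertion I would simply equate $N_p$ and $N_p^h$ component-wise in $\R^3$. The first two components give $yC=0$ and $xC=0$, so either $C=\partial(x,y)/\partial(u,v)=0$ or $x=y=0$; substituting back in the third component forces in either case $C=0$ and, when $C=0$, the compatibility relation $yA-xB=0$. Once $C=0$ is in force, the displayed formulas $N_p=(\partial(y,t),\partial(t,x),0)$ and $N^h_p=\partial(y,t)X+\partial(t,x)Y$ read off directly from the expressions above, and $|N_p|=\|N^h_p\|$ is again immediate from the orthonormality of $X,Y$ under $\langle\cdot,\cdot\rangle$.

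For the geometric conclusion, I would observe that $C=\partial(x,y)/\partial(u,v)=0$ precisely says that the differential of the projection $\sigma\mapsto(x,y)\in\C$ is singular at $p$; equivalently, $N_p$ has vanishing $t$-component, which forces $T=\partial_t\in T_p(\E)$. A plane through $p$ containing the vertical direction $\partial_t$ is precisely a plane orthogonal to the complex plane $\C$, which is what the statement claims. The main obstacle is purely bookkeeping: keeping track of the dictionary between the Heisenberg basis $\{X,Y,T\}$ and the Euclidean basis $\{\partial_x,\partial_y,\partial_t\}$ long enough to see the clean identity $N_p\cdot N_p^h=\|N^h_p\|^2$; once that is in hand, everything else is a direct comparison.
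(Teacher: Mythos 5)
Your proposal is correct and follows essentially the same route as the paper: express $N^h_p$ as a Euclidean vector via $X=\partial_x+2y\partial_t$, $Y=\partial_y-2x\partial_t$, compute $N_p\cdot N^h_p=\left(\partial(y,t)+2y\partial(x,y)\right)^2+\left(\partial(t,x)-2x\partial(x,y)\right)^2=\|N^h_p\|^2$, and read off the second claim component-wise. If anything your version is the more careful one: your third component $2y(A+2yC)-2x(B-2xC)$ is the accurate Euclidean expression (the paper's displayed third component $4(x^2+y^2)\partial(x,y)$ drops the $2yA-2xB$ term), and your remark that equality of third components additionally forces $y\,\partial(y,t)-x\,\partial(t,x)=0$ when $\partial(x,y)=0$ makes explicit what the paper's ``our second claim is immediate'' glosses over.
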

\begin{proof}
If $p=\sigma(u,v)$, then $N_p^h$ may be written as a  vector of $\R^3$ as follows
$$
N_p^h=\left((\partial(y,t)+2y\partial(x,y)),(\partial(t,x)-2x\partial(x,y)),4(x^2+y^2)\partial(x,y)\right),
$$ 
where we have denoted $\partial(y,t)/\partial(u,v)$ by $\partial(y,t)$ etc. By taking the Euclidean dot product we find
$$
N_p\cdot N^h_p=\left(\partial(y,t)+2y\partial(x,y)\right)^2+\left(\partial(t,x)-2x\partial(x,y)\right)^2,
$$
and this vanishes if and only if $p$ is characteristic. Our second claim is immediate.
\end{proof}

\medskip

\begin{corollary}
Let $\E$ be a regular surface of $\h$. Then away from the characteristic locus, \ref{unithornor} defines a nowhere vanishing vector field $\nu^h_\E\in\mathbb{H}(\E)$, such that $\|\nu^h_\E\|=1$.
\end{corollary}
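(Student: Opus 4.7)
The plan is to verify the three claims of the corollary in turn: that the formula \ref{unithornor} makes sense at every point $p\in\E\setminus\mathfrak{C}(\E)$, that the resulting vector lies in $\mathbb{H}(\E)$ and has unit sub--Riemannian norm, and finally that the local definition is independent of the surface patch, so it globalises to a well-defined vector field.

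First I would observe that at a non characteristic point $p=\sigma(u,v)$ we have $N^h_\sigma\neq 0$ by the very definition of $\mathfrak{C}(\E)$, so $\|N^h_\sigma\|>0$ and the quotient in \ref{unithornor2} is well-defined. The horizontality is built into the construction \ref{hornor2}: the $T$-component of $\sigma_u\wedge^\h\sigma_v$ has been explicitly removed, and the coordinate formula \ref{defn-hornor} confirms that only the $X$ and $Y$ components appear. The unit norm property is immediate from \ref{unithornor2}, where $N^h_\sigma$ has been divided by its sub--Riemannian norm $\|N^h_\sigma\|$, which is computed with respect to the metric \ref{eq:submetric} in which $X,Y$ are orthonormal.

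The only step requiring a separate observation is compatibility across overlapping patches, but this was already prepared in the discussion preceding the corollary: for two overlapping patches $(U,\sigma)$ and $(\tilde U,\tilde\sigma)$ around $p$ with transition map $\Phi=\sigma^{-1}\circ\tilde\sigma$, one has $N^h_{\tilde\sigma}=\det(\Phi)\,N^h_\sigma$, and the orientation hypothesis on $\E$ forces $\det(\Phi)>0$. Consequently, after normalisation the positive factor cancels and $\nu^h_{\tilde\sigma}=\nu^h_\sigma$ on the overlap. Hence the locally defined unit horizontal normals glue into a globally defined, nowhere vanishing horizontal vector field $\nu^h_\E$ on $\E\setminus\mathfrak{C}(\E)$ of unit sub--Riemannian norm.

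I do not expect a genuine obstacle here; the only place where a subtlety could creep in is the sign of $\det(\Phi)$. Were the orientation assumption on $\E$ dropped, the same calculation would merely produce a well-defined line field spanning $\mathbb{H}(\E)\cap\mathbb{H}_p(\h)^\perp$ rather than a distinguished unit vector field, so the orientation hypothesis on $\E$ is doing exactly the work one expects.
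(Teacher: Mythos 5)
Your proposal is correct and follows essentially the same route as the paper: the corollary is an immediate consequence of the definition of the characteristic locus (so $N^h_\sigma\neq 0$ off $\mathfrak{C}(\E)$), the construction \ref{hornor2} which discards the $T$--component, the normalisation \ref{unithornor2}, and the transition rule $N^h_{\tilde\sigma}=\det(\Phi)N^h_\sigma$ with $\det(\Phi)>0$ from orientability. The only (inessential) blemish is the notation ``$\mathbb{H}(\E)\cap\mathbb{H}_p(\h)^\perp$'' in your closing aside, which does not quite parse, but that remark is not part of the argument.
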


\medskip

\noindent Denote by $\J$ the complex operator acting in $\mathbb{H}(\h)$ by the relations
$$
\mathbb{J}X=Y,\quad\mathbb{J}Y=-X.
$$
The operator $\J$ acts in  the horizontal space  of a regular surface $\E$, and if $\nu^h_\E=\nu_1X+\nu_2Y$ then
$$
\J \nu^h_\E=-\nu_2X+\nu_1Y.
$$

\subsection{The Induced 1--Form. Contactomorphisms. Horizontal Flow}\label{sec:form}

Let $\E$ be a regular surface in $\h$ and denote by $\iota_\E$ the inclusion map
$
\iota_\E:\E\hookrightarrow\h,
$
given locally by a parametrisation $\sigma(u,v)=(x(u,v),y(u,v),t(u,v))$. Let $\omega=dt+2xdy-2ydx$ be the contact form of $\h$; the pullback $\omega_\E=\iota_\E^*\omega$ defines a 1--form on $\E$ which, in the local parametrisation is given by
\begin{eqnarray*}\label{omegaS}
 \omega_\E=\sigma^*\omega=(t_u+2xy_u-2yx_u) du+(t_v+2xy_v-2yx_v) dv.
\end{eqnarray*}

\medskip

\begin{prop}\label{prop-omega-locus}
The characteristic locus $\mathfrak{C}(\E)$ is the (closed) set of points of $\E$ at which $\omega_\E=0$. 
\end{prop}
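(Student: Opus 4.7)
The plan is to translate both conditions ``$p$ is characteristic'' and ``$\omega_\E = 0$ at $p$'' into the same statement about the position of the tangent plane: namely, that $T_p(\E)$ coincides with the horizontal plane $\mathbb{H}_p(\h)$. Since $\omega_\E$ is a smooth $1$-form, the vanishing set is automatically closed, so the closedness assertion is free.

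First I would unpack the condition $\omega_\E(p) = 0$. By the local formula
\[
\omega_\E = (t_u + 2xy_u - 2yx_u)\,du + (t_v + 2xy_v - 2yx_v)\,dv,
\]
this vanishes at $p = \sigma(u,v)$ iff $\omega(\sigma_u) = \omega(\sigma_v) = 0$, which, since $\ker\omega = \mathbb{H}(\h)$, is equivalent to $\sigma_u, \sigma_v \in \mathbb{H}_p(\h)$. Because $T_p(\E) = \mathrm{span}\{\sigma_u, \sigma_v\}$ is $2$-dimensional and so is $\mathbb{H}_p(\h)$, this is the same as $T_p(\E) = \mathbb{H}_p(\h) = \mathbb{H}_p(\E)$.

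Next I would reduce the characteristic condition to the same equality. Write $\sigma_u = a_1 X + a_2 Y + a_3 T$ and $\sigma_v = b_1 X + b_2 Y + b_3 T$ in the frame $\{X,Y,T\}$, and compute from the definition
\[
\sigma_u \wedge^\h \sigma_v = (a_2 b_3 - a_3 b_2)\,X + (a_3 b_1 - a_1 b_3)\,Y + (a_1 b_2 - a_2 b_1)\,T.
\]
If $T_p(\E) = \mathbb{H}_p(\h)$, then $a_3 = b_3 = 0$, so the horizontal part vanishes and $N_p^h = 0$. Conversely, suppose $N_p^h = 0$, i.e.\ $a_2 b_3 = a_3 b_2$ and $a_3 b_1 = a_1 b_3$. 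If either $a_3$ or $b_3$ is nonzero, these relations force $\sigma_u$ and $\sigma_v$ to be proportional, contradicting $\mathrm{rank}(\sigma_*) = 2$. Therefore $a_3 = b_3 = 0$, i.e.\ $T_p(\E) \subset \mathbb{H}_p(\h)$, and by dimension count $T_p(\E) = \mathbb{H}_p(\h)$.

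Combining the two equivalences yields $\omega_\E(p) = 0 \iff p \in \mathfrak{C}(\E)$. The only mildly delicate point is the rank-$2$ argument ruling out $a_3 \neq 0$ or $b_3 \neq 0$ when $N_p^h = 0$; everything else is a direct unwinding of the definitions together with the observation that $T_p(\E)$ and $\mathbb{H}_p(\h)$ are both $2$-planes in $T_p(\h)$.
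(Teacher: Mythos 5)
Your proof is correct and follows essentially the same chain of equivalences as the paper ($\omega_\E(p)=0 \iff \sigma_u,\sigma_v\in\mathbb{H}_p \iff N_p^h=0$); the only difference is that you supply the rank-$2$ argument justifying the middle equivalence, which the paper asserts without detail.
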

\begin{proof}
We have:
\begin{eqnarray*}
&&
 \omega_\E(p)=0\quad \text{for\;some}\;p\in\E\\
&&
\Longleftrightarrow\quad \omega_p(\sigma_u)=\omega_p(\sigma_v)=0 \quad \text{for\;each\;chart}\;(U,\sigma)\;\text{containing}\;p\\
&&
\Longleftrightarrow\quad \sigma_u\;\text{and}\;\sigma_v\in\mathbb{H}_p(\E)\quad \text{for\;each\;chart}\;(U,\sigma)\;\text{containing}\;p\\
&&
\Longleftrightarrow\quad (\sigma_u\times\sigma_v)^h=0\quad \text{for\;each\;chart}\;(U,\sigma)\;\text{containing}\;p\\
&&
\Longleftrightarrow\quad p\in\mathfrak{C}(\E).
\end{eqnarray*}
\end{proof}

\medskip

Regular surfaces in $\h$ with empty characteristic locus and will be treated separately in Section \ref{sec:empty}, where we will see some consequenses of Proposition \ref{prop-omega-locus}. 

\medskip

\begin{defn}\label{def:contact}
Let $\calS$ and $\tilde\calS$ be regular surfaces and $f:\calS\to\tilde\calS$ be a smooth diffeomorphism. We may assume a weaker condition, that is we will require $f$ to be a local diffeomorphism outside the characteristic loci of $\calS$ and $\tilde\calS$. The mapping $f$ is called a local contactomorphism of $\calS$ and $\tilde\calS$ if there exists a smooth function $\lambda$ so that
$$
f^*\omega_{\tilde\calS}=\lambda\omega_\calS.
$$
\end{defn}

\medskip

Since $f$ is a local diffeomorphism, if $\sigma:U\to\R^3$ is a surface patch for $\calS$ then $\tilde\sigma=f\circ\sigma$ is  a surface patch for $\tilde\calS$ (with the possible exception of characteristic points). It follows that $f:\calS\to\tilde\calS$ is a contactomorphism if and only if
\begin{equation}\label{eq:contact-cond}
\omega_{\tilde\sigma}(u,v)=\lambda(u,v)\omega_\sigma(u,v),\quad\text{for almost all}\;\;(u,v)\in U.
\end{equation}

\medskip

 By a {\sl surface curve} on  a regular surface   $\E$ we shall always mean a smooth mapping $\gamma:I\rightarrow\E$ where $I$ is an open interval of $\R$. We wish to find conditions so that a surface curve is horizontal, i.e. its horizontal tangent $\dot\gamma_h(s)\in\mathbb{H}_{\gamma(s)}(\E)$.

\medskip

\begin{prop}\label{horsurfacecurves}
Suppose that $\sigma:U\rightarrow\h$ is a surface patch,  and $\gamma(s)=\sigma(u(s),v(s))$, $s\in I$ is a smooth surface curve (that is, $\tilde\gamma(s)=(u(s),v(s))$ a smooth curve in $U$). Then away from the characteristic locus $\gamma$ is horizontal if and only if
$$
\dot{\tilde\gamma}\in{\rm ker}\omega_\E,
$$
or in other words,
\begin{equation*}
 (t_u+2xy_u-2yx_u)\dot u+(t_v+2xy_v-2yx_v)\dot v=0
\end{equation*}
where the dot denotes $d/ds$.
In this case,
\begin{equation*}\label{hor-sur-cur}
 \dot\gamma=(x_u\dot u+x_v\dot v)X+(y_u\dot u+y_v\dot v)Y.
\end{equation*}
\end{prop}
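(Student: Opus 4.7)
The plan is to expand $\dot\gamma(s)$ in the left-invariant frame $\{X,Y,T\}$ and recognise the $T$-component as exactly $\omega_\E$ applied to $\dot{\tilde\gamma}$. First I would apply the chain rule to $\gamma(s)=\sigma(u(s),v(s))$ to obtain
$$
\dot\gamma=\sigma_u\dot u+\sigma_v\dot v=(x_u\dot u+x_v\dot v)\tfrac{\partial}{\partial x}+(y_u\dot u+y_v\dot v)\tfrac{\partial}{\partial y}+(t_u\dot u+t_v\dot v)\tfrac{\partial}{\partial t}.
$$
Inverting the definitions of $X,Y,T$ from Section \ref{sec:prel} yields $\partial_x=X-2yT$, $\partial_y=Y+2xT$ and $\partial_t=T$, which I would substitute into the previous display and collect terms.

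The key step is then to read off the three coefficients separately. The coefficients of $X$ and $Y$ are immediately $x_u\dot u+x_v\dot v$ and $y_u\dot u+y_v\dot v$, while collecting the $T$-coefficient produces
$$
(t_u+2xy_u-2yx_u)\dot u+(t_v+2xy_v-2yx_v)\dot v,
$$
which is precisely $\omega_\E(\dot{\tilde\gamma})$ in view of the formula for $\omega_\E$ displayed just before Proposition \ref{prop-omega-locus}. Thus $\dot\gamma$ has vanishing vertical part exactly when $\omega_\E(\dot{\tilde\gamma})=0$, i.e.\ $\dot{\tilde\gamma}\in\ker\omega_\E$, and under this condition the $X,Y$ coefficients above give the claimed expression for $\dot\gamma$.

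Finally, the hypothesis that $\gamma$ stays away from $\mathfrak{C}(\E)$ is what makes the condition $\dot{\tilde\gamma}\in\ker\omega_\E$ geometrically meaningful: by Proposition \ref{prop-omega-locus}, $\omega_\E$ is nonzero outside the characteristic locus, so $\ker\omega_\E$ is a genuine $1$-dimensional line field on $U$ and horizontality becomes a nondegenerate first-order ODE for $(u(s),v(s))$. The algebraic identification itself is valid pointwise, so there is no real obstacle here — the statement reduces to the substitution of the horizontal frame into the chain-rule expression for $\dot\gamma$.
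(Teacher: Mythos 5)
Your proof is correct and is essentially the paper's argument made explicit in coordinates: the paper writes the chain $\omega(\dot\gamma)=0\Leftrightarrow\omega(\sigma_*\dot{\tilde\gamma})=0\Leftrightarrow(\sigma^*\omega)(\dot{\tilde\gamma})=0\Leftrightarrow\dot{\tilde\gamma}\in\ker\omega_\E$, while you carry out the same identification by expanding $\dot\gamma$ in the frame $\{X,Y,T\}$ and reading off the $T$-coefficient as $\omega_\E(\dot{\tilde\gamma})$. Your version has the small bonus of producing the displayed formula for $\dot\gamma$ along the way, which the paper leaves as ``derived immediately.''
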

\begin{proof}
We only prove the first statement; the other two are derived immediately. We have
\begin{eqnarray*}
\gamma\;\text{horizontal}&\Longleftrightarrow&\omega(\dot\gamma_h)=0\\
&\Longleftrightarrow&\omega(\sigma_*\dot{\tilde\gamma})=0\\
&\Longleftrightarrow&(\sigma^*\omega)(\dot{\tilde\gamma})=0\\
&\Longleftrightarrow&\dot{\tilde\gamma}\in{\rm ker}\omega_\E.
\end{eqnarray*}

\end{proof}

\medskip

The following Proposition indicates the importance of the unit horizontal normal vector field $\J\nu_\E$. 

\medskip

\begin{prop}\label{integrability-Jn}
The 1--form $\omega_\E$ defines an integrable foliation of $\E$ (with singularities at characteristic points) by horizontal surface curves. These curves are tangent to $\J\nu_\E^h$.
\end{prop}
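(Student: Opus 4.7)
The plan is to split the statement into three claims: off $\mathfrak{C}(\E)$ the form $\omega_\E$ defines an integrable distribution; the leaves of the resulting foliation are horizontal surface curves; and $\J\nu^h_\E$ is tangent to those leaves. By Proposition \ref{prop-omega-locus}, $\omega_\E$ is nowhere zero on $\E\setminus\mathfrak{C}(\E)$, so $\ker\omega_\E$ is a smooth rank--$1$ distribution on this $2$--manifold and is therefore automatically integrable (any rank--$1$ distribution is integrable via the ODE flow of a local spanning vector field). That the integral leaves are horizontal surface curves is then the content of Proposition \ref{horsurfacecurves}: a surface curve $\gamma(s)=\sigma(u(s),v(s))$ is horizontal iff $\dot{\tilde\gamma}\in\ker\omega_\E$.

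It remains to show that $\J\nu^h_\E$ spans $\ker\omega_\E$ at every non--characteristic point. Since $\J\nu^h_\E$ is horizontal and $\omega$ annihilates horizontal vectors, it automatically lies in $\ker\omega_\E$; the only non--trivial task is tangency to $\E$, i.e.\ $N_\sigma\cdot\J\nu^h_\sigma=0$ in the Euclidean sense. Writing $\nu^h_\sigma=\nu_1X+\nu_2Y$ with $\nu_i=n_i/\|N^h_\sigma\|$ and
\[
n_1=\frac{\partial(y,t)}{\partial(u,v)}+2y\frac{\partial(x,y)}{\partial(u,v)},\qquad n_2=\frac{\partial(t,x)}{\partial(u,v)}-2x\frac{\partial(x,y)}{\partial(u,v)}
\]
as in (\ref{defn-hornor}), I expand $\J\nu^h_\sigma=-\nu_2X+\nu_1Y$ in $\R^3$ coordinates using $X=\partial_x+2y\partial_t$, $Y=\partial_y-2x\partial_t$, and dot with $N_\sigma=\bigl(\partial(y,t),\partial(t,x),\partial(x,y)\bigr)$. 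The $\partial_t$--contributions of $\J\nu^h_\sigma$ supply precisely the $+2y\partial(x,y)$ and $-2x\partial(x,y)$ twists that promote $\partial(y,t),\partial(t,x)$ to $n_1,n_2$, so the inner product collapses to $-\nu_2n_1+\nu_1n_2=0$. Hence $\J\nu^h_\E$ is a nowhere--vanishing horizontal surface vector field off $\mathfrak{C}(\E)$, and therefore spans the one--dimensional distribution $\ker\omega_\E$; its flow lines are the leaves of the foliation.

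The only real obstacle is this last computation. It would be tempting to argue abstractly that ``the horizontal $90^\circ$ rotation of the horizontal normal must be tangent'', but Proposition \ref{prop-nu-nuh} already warns that $N^h_\sigma$ is \emph{not} the horizontal part of $N_\sigma$, so the tangency has to be verified by the explicit bookkeeping of the $2y$ and $-2x$ twist terms rather than by a Euclidean--style orthogonality argument.
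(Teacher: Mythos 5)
Your proof is correct, and it reaches the conclusion by a route that is genuinely different in direction from the paper's. The paper works inside the parameter domain: it writes down the explicit vector field $J\mathcal{V}=\beta\,\partial_u-\alpha\,\partial_v$ with $\alpha,\beta$ as in \eqref{alphabeta}, observes that it lies in $\ker\omega_\E$ by construction, and then computes the pushforward $\sigma_*(J\mathcal{V})=\beta\sigma_u-\alpha\sigma_v=-\nu_2X+\nu_1Y=\J\nu_\E^h$ using the identities $\beta y_u-\alpha y_v=\nu_1$ and $\beta x_u-\alpha x_v=-\nu_2$. You instead start from the ambient vector $\J\nu_\E^h$, verify by the Euclidean computation $N_\sigma\cdot\J\nu^h_\sigma=-\nu_2n_1+\nu_1n_2=0$ that it is tangent to $\E$, and then conclude by a dimension count: a nonzero tangent vector annihilated by $\omega$ must span the line field $\ker\omega_\E$, which is one-dimensional off $\mathfrak{C}(\E)$ by Proposition \ref{prop-omega-locus}. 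Both arguments are complete; your orthogonality computation is right (the $\partial_t$-components of $X$ and $Y$ do exactly convert $\partial(y,t)$ and $\partial(t,x)$ into $n_1$ and $n_2$), and your closing caveat that $N^h_\sigma$ is not the horizontal part of $N_\sigma$, so that tangency cannot be waved through by a naive rotation argument, is well taken. What the paper's construction buys in exchange for being less conceptual is the explicit coordinate expression of the flow: the integral curves are the solutions of $\dot u=\beta$, $\dot v=-\alpha$ in the parameter domain, and these formulas (together with \eqref{alphabeta2}) are reused later, e.g.\ in the proof of Proposition \ref{prop:main} and in the remark identifying the rulings $s=\mathrm{const.}$ as the horizontal flow of a straight ruled surface. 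Your version establishes the proposition as stated but would need to be supplemented by those formulas where the paper invokes them.
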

\begin{proof}
Integrability is obvious: $\omega_\E$ is a $1-$form defined in a two--dimensional manifold. 
For the second statement, we set 
\begin{equation}\label{alphabeta}
\alpha=\frac{1}{\|N^h\|}(t_u-2yx_u+2xy_u),\quad \beta=\frac{1}{\|N^h\|}(t_v-2yx_v+2xy_v),
\end{equation}
where $\|N^h\|=\|(\sigma_u\wedge^\h\sigma_v)^h\|$,
and consider
\begin{equation}\label{JV}
J\mathcal{V}=\beta \frac{\partial}{\partial u}-\alpha \frac{\partial}{\partial v}\in{\rm ker}\omega_\E.
\end{equation}
By observing that
\begin{eqnarray*}\label{hor-sur-cur2}
&&
\beta y_u-\alpha y_v=\frac{\partial(y,t)+2y\partial(x,y)}{\|N^h\|}=\nu_1,\\
 &&\label{hor-sur-cur3}
\beta x_u-\alpha x_v=-\frac{\partial(t,x)-2x\partial(x,y)}{\|N^h\|}=-\nu_2,
\end{eqnarray*}
we obtain
\begin{eqnarray*}
 \sigma_*\mathcal(J\mathcal{V})&=&\beta\sigma_u-\alpha\sigma_v\\
&=&\beta(x_uX+y_uY+\|N^h\|\alpha T)-\alpha(x_vX+y_vY+\|N^h\|\beta T)\\
&=&(\beta x_u-\alpha x_v)X+(\beta y_u-\alpha y_v)Y\\
&=&-\nu_2X+\nu_1Y=\J\nu_\E.
\end{eqnarray*}
Note finally that by \ref{JV} the integral curves of $\J\nu_\E$ are the solutions of the system of differential equations
$$
\dot u=\beta,\quad\dot v=-\alpha.
$$
\end{proof}

\medskip

We remark for later use that when $D=\partial(x,y)\neq 0$ we also have the following expressions for $\alpha$ and $\beta$:
\begin{equation}\label{alphabeta2}
\alpha=-\frac{\nu_1x_u+\nu_2y_u}{D},\quad \beta=-\frac{\nu_1x_v+\nu_2y_v}{D}.
\end{equation}

\begin{defn}\label{horflow}
The foliation of $\E$ by the integrable curves of $\J\nu_\E$ is called the {\sl horizontal flow} of $\E$.
\end{defn}

\subsection{Horizontal Mean Curvature}\label{sec:hormean}
Horizontal mean curvature is defined as follows.

\medskip

\begin{defn}\label{hormean}
Let $\E$ be a non characteristic point of a regular surface $\E$ and let also $\nu^h_p$ $=\nu_1X+\nu_2Y$ be the  unit horizontal normal of $\E$ at $p$. The {\sl horizontal mean curvature} $H^h(p)$ of $\E$ at $p$ is given by
$$
H^h(p)=X_p\nu_1+Y_p\nu_2.
$$ 
\end{defn}

\medskip

A more geometric but equivalent definition is following by the next proposition according to which, 
the horizontal mean curvature at non characteristic  points of a regular surface may be defined  as the signed curvature of the projection to $\C$ of the leaf of the horizontal flow passing from $p$ (see also Proposition 4.24 of \cite{CDPT}).

\medskip

\begin{prop}\label{prop:mean-sec}
Let $\E$ be a regular surface and $p\in\E$ a non characteristic point. Let $\nu_\E^h=$ $\nu_1X+\nu_2Y$ be the unit horizontal normal vector field of $\E$, and  $\gamma$ the unique unit speed surface curve passing from $p$ which is  tangent  to $\J\nu_p^h$ at $p$. If $\pi=pr_\C\gamma$ is the projection of $\gamma$ on $\C$, $p'$ is the projection of $p$ and $\kappa_s$ is the signed curvature of $\pi$, then
\begin{equation*}
\kappa_s(p')=X_p\nu_1+Y_p\nu_2.
\end{equation*}
\end{prop}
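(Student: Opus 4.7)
The plan is to compute $\kappa_s(p')$ directly from the planar formula $\kappa_s=\dot x\ddot y-\dot y\ddot x$ for the signed curvature of a unit-speed curve, and reduce the result to $X_p\nu_1+Y_p\nu_2$ by invoking only the unit normalisation $\nu_1^2+\nu_2^2=1$.

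First I would observe that, since $\gamma$ is horizontal, unit speed, and tangent at $p$ to the unit horizontal tangent field $\J\nu^h_\E=-\nu_2 X+\nu_1 Y$ (which by Proposition \ref{integrability-Jn} is tangent to $\E$ and generates $\ker\omega_\E$), the curve $\gamma$ is in fact the integral curve of $\J\nu^h_\E$ through $p$:
\[
\dot\gamma(s)=-\nu_2(\gamma(s))\,X_{\gamma(s)}+\nu_1(\gamma(s))\,Y_{\gamma(s)}.
\]
Reading off the $\partial_x,\partial_y$-components of $X$ and $Y$ (the $\partial_t$-component being forced by horizontality) shows that $\pi=pr_\C\gamma$ is a unit-speed plane curve with $\dot\pi(s)=(-\nu_2(\gamma(s)),\nu_1(\gamma(s)))$.

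Second, plugging $\dot x=-\nu_2$, $\dot y=\nu_1$, $\ddot x=-(\J\nu^h)\nu_2$, $\ddot y=(\J\nu^h)\nu_1$ into $\kappa_s=\dot x\ddot y-\dot y\ddot x$ at $s=0$ yields
\[
\kappa_s(p')=\nu_1(p)\,(\J\nu^h_p)\nu_2-\nu_2(p)\,(\J\nu^h_p)\nu_1,
\]
where each $(\J\nu^h)\nu_i$ is a derivative of $\nu_i$ along a vector tangent to $\E$, hence intrinsic. I would then expand $\J\nu^h=-\nu_2 X+\nu_1 Y$ and multiply out to get
\[
\kappa_s(p')=\nu_2^2\,X\nu_1-\nu_1\nu_2\,X\nu_2+\nu_1^2\,Y\nu_2-\nu_1\nu_2\,Y\nu_1,
\]
and apply the identities $\nu_1 X\nu_1+\nu_2 X\nu_2=0$ and $\nu_1 Y\nu_1+\nu_2 Y\nu_2=0$ obtained by differentiating $\nu_1^2+\nu_2^2=1$ along $X$ and $Y$. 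These collapse the cross terms to $\nu_1^2 X\nu_1$ and $\nu_2^2 Y\nu_2$ respectively, leaving $(\nu_1^2+\nu_2^2)(X\nu_1+Y\nu_2)=X_p\nu_1+Y_p\nu_2$, as required.

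The main conceptual subtlety is to interpret $X\nu_i$ and $Y\nu_i$, since $\nu_1,\nu_2$ a priori live only on $\E$. Any smooth horizontal unit extension of $\nu^h$ to a neighbourhood of $p$ in $\h$ (for instance the one obtained from a local defining function $u$ for $\E$ by setting $\nu^h=\nabla_H u/\|\nabla_H u\|$) suffices to make them, but individually these quantities are extension-dependent. The calculation above uses only the tangential combinations $(\J\nu^h)\nu_i$ together with the unit-normalisation identity, and its upshot is that although each of $X\nu_1$ and $Y\nu_2$ depends on the extension, their sum does not, and coincides with the intrinsic quantity $\kappa_s(p')$. Making this matching visible is the real content of the proposition; the rest is bookkeeping.
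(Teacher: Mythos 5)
Your proof is correct and follows essentially the same route as the paper: both compute the derivative of $\nu_1,\nu_2$ along the integral curve of $\J\nu^h_\E$ (the paper does this via an explicit chain-rule computation in coordinates, in which the $T\nu_i$ terms cancel by horizontality --- exactly what your shortcut $\dot\nu_i=(\J\nu^h)\nu_i$ encapsulates) and then collapse the resulting expression using the identities obtained by differentiating $\nu_1^2+\nu_2^2=1$ along $X$ and $Y$. Your closing remark that the individual quantities $X\nu_i$, $Y\nu_i$ depend on the choice of unit extension while the combination $X\nu_1+Y\nu_2$ does not is a worthwhile clarification that the paper leaves implicit.
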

\begin{proof}
 Let $p\in\E$ and $\gamma(s)$ the unit  speed horizontal surface curve passing from $p$. Let $\pi(s)$ be the projection of $\gamma(s)$ in $\C=\R^2$; then its tangent  is
\begin{equation*}
 \dot\pi(s)=(-\nu_2(s),\nu_1(s))
\end{equation*}
and its of unit speed.
We have by applying the chain rule  that
\begin{eqnarray*}
\dot \nu_1&=&(\nu_1)_x\dot x+(\nu_1)_y\dot y+(\nu_1)_t\dot t\\
&=&(X\nu_1-2yT\nu_1)(x_u\dot u+x_v\dot v)+(Y\nu_1+2xT\nu_1)(y_u\dot u+y_v\dot v)+T\nu_1(t_u\dot u+t_v\dot v)\\
&=&-(X\nu_1-2yT\nu_1)\nu_2+(Y\nu_1+2xT\nu_1)\nu_1+T\nu_1(-2y\nu_2-2x\nu_1)\\
&=&-\nu_2(X\nu_1+Y\nu_2),
\end{eqnarray*}
where we have used
$$
\dot\gamma(s)=(x_u\dot u+x_v\dot v)X+(y_u\dot u+y_v\dot v)Y=-n_2X+\nu_1Y,
$$
and the relation  $\nu_1Y\nu_1=-\nu_2Y\nu_2$ which follow from $\nu_1^2+\nu_2^2=1$.
Working analogously for $\dot \nu_2$ (using $\nu_1X\nu_1=-\nu_1X\nu_2$ this time), we have
$
\dot \nu_2=\nu_1(X\nu_1+Y\nu_2),
$
hence
\begin{eqnarray*}
\ddot\pi=(-\dot\nu_2,\dot\nu_1)&=&(-\nu_1(X\nu_1+Y\nu_2),-\nu_2(X\nu_1+Y\nu_2))\\
&=&\kappa_s(-\nu_1,-\nu_2)
\end{eqnarray*}
where $\kappa_s$ is the signed curvature of the curve $\pi$. This yields
$\kappa_s=X\nu_1+Y\nu_2$. 

\end{proof}

\medskip

A local expression for $H^h$ is in order:

\medskip

\begin{prop}\label{prop:main}
Let $\E$ be a regular surface in $\h$. In every surface patch $\sigma=(x,y,t)$ with $\partial(x,y)\neq 0$ and sufficiently away from the characteristic locus, the horizontal mean curvature is given by
 \begin{equation}
  H^h(\sigma)=\frac{\partial(\nu_1,y)+\partial(x,\nu_2)}{\partial(x,y)},
 \end{equation}
where $\nu_i$, $i=1,2$ are the components of the unit horizontal normal vector $\nu$ of $\E$. If $\partial(x,y)=0$, then $H^h(\sigma)=0$.
\end{prop}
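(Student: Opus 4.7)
The plan is a direct chain--rule calculation that exploits the fact that $\partial(x,y)\neq 0$ means the surface is locally a graph over the $xy$--plane. I would begin by applying the inverse function theorem to the planar projection $(u,v)\mapsto (x(u,v),y(u,v))$, giving a local representation of the patch as $t=t(x,y)$. The explicit formula \ref{defn-hornor} then shows that $\nu_1$ and $\nu_2$ on the surface are functions of $x,y$ and the partials $t_x,t_y$ only, so they admit a canonical extension to a neighborhood in $\h$ as $t$--independent functions. This canonical extension unambiguously defines the ambient derivatives $X_p\nu_1,\,Y_p\nu_2$ appearing in Definition \ref{hormean}, and since $T\nu_1=T\nu_2=0$ while $X=\partial_x+2y\partial_t$ and $Y=\partial_y-2x\partial_t$, one obtains
$$
H^h=X\nu_1+Y\nu_2=\frac{\partial\nu_1}{\partial x}+\frac{\partial\nu_2}{\partial y}.
$$

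The second step is to convert the Cartesian derivatives on the right into derivatives in the surface parameters $u,v$. Since $\partial(x,y)\neq 0$, the Jacobian matrix of $(u,v)\mapsto (x,y)$ has inverse
$$
\begin{pmatrix}u_x & u_y\\ v_x & v_y\end{pmatrix}=\frac{1}{\partial(x,y)}\begin{pmatrix}y_v & -x_v\\ -y_u & x_u\end{pmatrix},
$$
and applying the chain rule to $\nu_1(u(x,y),v(x,y))$ and $\nu_2(u(x,y),v(x,y))$ yields
$$
\frac{\partial\nu_1}{\partial x}=\frac{(\nu_1)_u y_v-(\nu_1)_v y_u}{\partial(x,y)}=\frac{\partial(\nu_1,y)}{\partial(x,y)},\qquad \frac{\partial\nu_2}{\partial y}=\frac{(\nu_2)_v x_u-(\nu_2)_u x_v}{\partial(x,y)}=\frac{\partial(x,\nu_2)}{\partial(x,y)}.
$$
Summing gives the claimed formula. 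For the degenerate case $\partial(x,y)=0$, the tangent plane to $\E$ at $p$ contains the vertical direction $T$, so by Proposition \ref{prop-nu-nuh} the surface is tangent there to a plane orthogonal to $\C$; on such a vertical plane $\nu_1,\nu_2$ reduce to constants and Definition \ref{hormean} immediately gives $H^h=0$.

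The main obstacle is conceptual rather than computational: the ambient derivatives in Definition \ref{hormean} depend a priori on the off--surface extension of $\nu_1,\nu_2$, and the essential content of the proof is the observation that the hypothesis $\partial(x,y)\neq 0$ provides a canonical $t$--independent extension through the graph representation. Once this extension is fixed, the identification of $H^h$ with a planar divergence and the Jacobian chain--rule manipulation are entirely routine.
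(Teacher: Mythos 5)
For the main case $\partial(x,y)\neq 0$ your argument is correct but takes a genuinely different route from the paper's. The paper stays in an arbitrary parametrisation, writes the chain rule $(\nu_i)_u=(X\nu_i)x_u+(Y\nu_i)y_u+(t_u-2yx_u+2xy_u)T\nu_i$ (and its $v$--analogue), solves the resulting $2\times 2$ systems for $X\nu_1$ and $Y\nu_2$ separately, and only at the end kills the extension--dependent terms through the identity $\nu_1T\nu_1+\nu_2T\nu_2=0$. You instead pass to the graph representation and choose the $t$--independent unit extension, so that $T\nu_1=T\nu_2=0$ from the outset and everything reduces to inverting the planar Jacobian; your computation $\partial_x\nu_1+\partial_y\nu_2=\bigl(\partial(\nu_1,y)+\partial(x,\nu_2)\bigr)/\partial(x,y)$ is right. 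What your version buys is that the extension issue, which the paper leaves implicit, is made explicit and then disposed of cheaply; what it still owes is a one--line check that $X\nu_1+Y\nu_2$ does not depend on \emph{which} unit extension is used, since Definition \ref{hormean} does not single one out. (Two unit extensions differ by $gh_i$ for a defining function $g$ of $\E$; the discrepancy on $\E$ is $h_1Xg+h_2Yg=\|N^h\|(h_1\nu_1+h_2\nu_2)$, which vanishes because differentiating $1=(\nu_1+gh_1)^2+(\nu_2+gh_2)^2$ forces $h_1\nu_1+h_2\nu_2=0$ on $\E$. This is exactly the cancellation the paper performs, with $T$ in place of a general transverse direction.) Without that remark, calling your extension ``canonical'' is a convention rather than a proof that you have computed the quantity of Definition \ref{hormean}.

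Your treatment of the degenerate case $\partial(x,y)=0$ does not work. Pointwise tangency of $\E$ to a vertical plane --- which is all that Proposition \ref{prop-nu-nuh} provides --- does not make $\nu_1,\nu_2$ constant: the right circular cylinder $x^2+y^2=R^2$, parametrised by $(R\cos u,R\sin u,v)$, has $\partial(x,y)\equiv 0$, is tangent at each point to a \emph{different} vertical plane, and has $\nu^h=\cos u\,X+\sin u\,Y$ with $H^h\equiv 1/R\neq 0$, as the paper itself computes in Proposition \ref{prop:closed}. So the step ``on such a vertical plane $\nu_1,\nu_2$ reduce to constants'' fails. You are in good company: the paper's own proof of this case makes the same leap from pointwise tangency to ``the image of $\sigma$ belongs to that plane,'' and the assertion $\partial(x,y)=0\Rightarrow H^h=0$ is simply false unless one assumes in addition that the whole patch lies in a single vertical plane.
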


\begin{proof}
 Suppose first that $\partial(x,y)\neq 0$. Using the chain rule we write
 \begin{eqnarray*}
  &&\label{eq:H1}
  (\nu_1)_u=(\nu_1)_xx_u+(\nu_1)_yy_u+(\nu_1)_tt_u=(X\nu_1)x_u+(Y\nu_1)y_u+(t_u-2yx_u+2xy_u)T\nu_1,\\
 &&\label{eq:H2}
  (\nu_2)_u=(\nu_2)_xx_u+(\nu_2)_yy_u+(\nu_2)_tt_u=(X\nu_2)x_u+(Y\nu_2)y_u+(t_u-2yx_u+2xy_u)T\nu_2,\\
  &&\label{eq:H3}
  (\nu_1)_v=(\nu_1)_xx_v+(\nu_1)_yy_v+(\nu_1)_tt_v=(X\nu_1)x_v+(Y\nu_1)y_v+(t_v-2yx_v+2xy_v)T\nu_1,\\
 &&\label{eq:H4}
  (\nu_2)_v=(\nu_2)_xx_v+(\nu_2)_yy_v+(\nu_2)_tt_v=(X\nu_2)x_v+(Y\nu_2)y_v+(t_v-2yx_v+2xy_v)T\nu_2.
  \end{eqnarray*}
The first and the third equation are written as
\begin{eqnarray*}
 &&
 (X\nu_1)x_u+(Y\nu_1)y_u=(\nu_1)_u-\alpha\|N^h\| T\nu_1,\\
 &&
 (X\nu_1)x_v+(Y\nu_1)y_v=(\nu_1)_v-\beta\|N^h\| T\nu_1
\end{eqnarray*}
where we have used Equations \ref{alphabeta}. Solving the system we obtain
$$
X\nu_1=\frac{\partial(\nu_1,y)+\|N^h\|\nu_1T\nu_1}{\partial(x,y)},\quad Y\nu_1=\frac{\partial(x,\nu_1)+\|N^h\|\nu_2T\nu_1}{\partial(x,y)},
$$
where we have used Equations \ref{alphabeta2}. 
 In an analogous manner, we obtain the following from the second and the fourth equations:
 $$
 X\nu_2=\frac{\partial(\nu_2,y)+\|N^h\|\nu_1T\nu_2}{\partial(x,y)},\quad Y\nu_2=\frac{\partial(x,\nu_2)+\|N^h\|\nu_2T\nu_2}{\partial(x,y)}.
 $$
 Therefore
 \begin{eqnarray*}
 X\nu_1+Y\nu_2&=& \frac{\partial(\nu_1,y)+\partial(x,\nu_2)+\|N^h\|(\nu_1T\nu_1+\nu_2T\nu_2)}{\partial(x,y)}\\
 &=&\frac{\partial(\nu_1,y)+\partial(x,\nu_2)}{\partial(x,y)},
 \end{eqnarray*}
 since $\nu_1^2+\nu_2^2=1$ and hence $\nu_1T\nu_1+\nu_2T\nu_2=0$.
 
Finally if $\partial(x,y)=0$, then from Proposition \ref{prop-nu-nuh} it is deduced that the horizontal normal vector field $\nu^h_\sigma$ is orthogonal to a plane vertical to the complex plane and the image of $\sigma$ belongs to that plane. Thus $\mathbb{J}\nu^h_\sigma$ is tangent to the plane and the horizontal flow comprises of straight lines. The proof is complete. 
\end{proof}

\medskip

\begin{prop}\label{prop:contact-minimal}
If a regular surface $\E$ in $\h$ is locally contactomorphic to the complex plane, then it is $H-$minimal.
\end{prop}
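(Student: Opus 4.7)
The plan is to apply Proposition \ref{prop:mean-sec}, which identifies $H^h(p)$ with the signed curvature of the projection to $\C$ of the horizontal flow leaf through $p$. It then suffices to show that every such projected leaf is a straight line in $\C$.

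Given a local contactomorphism $f\colon\E\to\C$, I would choose a local parametrization $\sigma\colon U\to\E$ near a non-characteristic point $p$ so that $\tilde\sigma=f\circ\sigma$ is the standard parametrization $\tilde\sigma(u,v)=(u,v,0)$ of $\C$. The contactomorphism condition then reads $2(u\,dv-v\,du)=\lambda\,\omega_\sigma$ on $U$, so $\ker\omega_\sigma=\ker\omega_{\tilde\sigma}$ is the radial distribution spanned by $u\partial_u+v\partial_v$. By Proposition \ref{integrability-Jn} the horizontal flow leaves of $\E$ correspond in $(u,v)$-coordinates to the radial lines $\{v=mu\}$ through the origin.

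Combining the identities $\|N^h\|\alpha=-2v/\lambda$ and $\|N^h\|\beta=2u/\lambda$ (from \ref{alphabeta} and the contactomorphism) with the formulas $\nu_1=\beta y_u-\alpha y_v$ and $\nu_2=\alpha x_v-\beta x_u$ derived in the proof of Proposition \ref{integrability-Jn}, I obtain
$$
\nu_1=\frac{2(u y_u+v y_v)}{\lambda\|N^h\|},\qquad \nu_2=-\frac{2(u x_u+v x_v)}{\lambda\|N^h\|}.
$$
Since the unit tangent to the projected leaf is $(-\nu_2,\nu_1)$, straightness of the leaf (and hence $\kappa_s=H^h=0$) reduces to showing that this vector is constant along each radial line, equivalently that its derivative along $u\partial_u+v\partial_v$ vanishes.

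The main obstacle is this final constancy check. I expect to need the differential consequence $d\omega_{\tilde\sigma}=d(\lambda\omega_\sigma)$ of the contactomorphism condition, which produces a pointwise identity among $\lambda$, $\|N^h\|$, and $\partial(x,y)$ capable of cancelling the second-order derivatives that arise when $(-\nu_2,\nu_1)$ is differentiated along the radial field. Here the special form $\omega_{\tilde\sigma}=2(u\,dv-v\,du)$ of the target plays its essential role, as $d\omega_{\tilde\sigma}$ reduces to a multiple of the area form $du\wedge dv$.
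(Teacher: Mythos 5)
Your reduction via Proposition \ref{prop:mean-sec} to the claim that the projected horizontal flow leaves are straight is the right reading of the problem, and your formulas for $\alpha,\beta,\nu_1,\nu_2$ are correct. But the step you defer --- the ``final constancy check'' --- is not a computation waiting to be done: it fails, and the differential identity you hope to extract from $d\omega_{\tilde\sigma}=d(\lambda\omega_\sigma)$ cannot rescue it. Since $d\omega_\sigma=4\,\partial(x,y)\,du\wedge dv$ and $d\bigl(2(u\,dv-v\,du)\bigr)=4\,du\wedge dv$, taking $d$ of $\lambda\omega_\sigma=2(u\,dv-v\,du)$ yields exactly one scalar relation, $\mu+\tfrac12R\mu=\partial(x,y)$ with $\mu=1/\lambda$ and $R=u\partial_u+v\partial_v$. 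This is a linear first--order ODE for $\mu$ along each radial line, solvable for \emph{arbitrary} $x(u,v),y(u,v)$ with $\partial(x,y)\neq0$; it therefore imposes no constraint on the shape of the projected leaves $s\mapsto (x,y)(e^{s}u_0,e^{s}v_0)$. Concretely, take $x=u$, $y=v+u^2$, $t=-\tfrac23u^3$ near $(u,v)=(1,0)$. One checks directly that $\omega_\sigma=2(u\,dv-v\,du)$, so $\sigma(u,v)\mapsto(u,v,0)$ is a contactomorphism onto a piece of $\C$ with $\lambda\equiv1$; yet the horizontal flow projects to the parabolas $y=x^2+cx$, and $H^h=2u^3\bigl((v+2u^2)^2+u^2\bigr)^{-3/2}\neq0$. (This surface is just the graph $t=-\tfrac23x^3$.) The underlying reason is that for $1$--forms on a surface the condition $f^*\omega_{\tilde\E}=\lambda\omega_\E$ only says that $f$ matches the two kernel foliations, and by the flow--box theorem any two nonsingular foliations of $2$--manifolds by curves are locally diffeomorphic; so the hypothesis, read literally as in Definition \ref{def:contact}, carries almost no geometric information away from characteristic points.

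The paper's own proof closes the computation only because it quietly uses a stronger hypothesis: after reparametrising $\E$ as a local graph $t=f(x,y)$, it takes the contactomorphism to be the vertical projection $(x,y,f(x,y))\mapsto(x,y,0)$, so that the contact condition becomes the pointwise proportionality of $(f_x-2y,\,f_y+2x)$ to $(-2y,\,2x)$; this forces $\nu^h=\pm(yX-xY)/(x^2+y^2)^{1/2}$, and then $H^h=0$ follows from Proposition \ref{prop:main}. In your picture that extra input is precisely the statement that the leaves project to Euclidean rays through the origin, i.e.\ that $f$ is compatible with $pr_{\C}$. So your plan can only be completed by adding such a compatibility assumption (or some other strengthening of Definition \ref{def:contact}); as written, the constancy check you flag as the main obstacle is exactly where the argument --- and, under the literal definition, the statement itself --- breaks down.
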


\begin{proof}
First we prove the statement for graphs $G_f$ of smooth functions $t=f(x,y)$ over $\C$. Here $(x,y)$ lie in an open subset of the plane. Let
$$
\sigma(x,y)=(x,y,f(x,y)),\quad (x,y)\in U.
$$
The induced $1-$form is $\omega_{G_f}=(f_x-2y)dx+(f_y+2x)dy$. From the contactomorphism condition we also have
\begin{equation*}
f_x-2y=-2\lambda y,\quad\text{and}\quad f_y+2x=2\lambda x
\end{equation*} 
for some non zero function $\lambda$. Moreover,
$$
N^h=(-f_x+2y)X+(-f_y-2x)Y=2\lambda(yX-xY),
$$
and therefore 
$$
\nu_{G_f}=\nu_1X+\nu_2Y=\pm\frac{yX-xY}{(x^2+y^2)^{1/2}}.
$$
Using Proposition \ref{prop:main} we have for the positive sign case (the other case is treated analogously):
\begin{eqnarray*}
H^h&=&\partial(\nu_1,y)+\partial(x,\nu_2) \quad (\partial(x,y)=1),\\
&=&\partial\left(\frac{y}{(x^2+y^2)^{1/2}},y\right)+\partial\left(\frac{x}{(x^2+y^2)^{1/2}},x\right)\\
&=&y\partial\left(\frac{1}{(x^2+y^2)^{1/2}},y\right)+x\partial\left(\frac{1}{(x^2+y^2)^{1/2}},x\right)\\
&=&y\partial_x\left(\frac{1}{(x^2+y^2)^{1/2}}\right)-x\partial_y\left(\frac{1}{(x^2+y^2)^{1/2}}\right)\\
&=&y\cdot\frac{-x}{(x^2+y^2)^{3/2}}-x\cdot\frac{-y}{(x^2+y^2)^{3/2}}\\
&=&0.
\end{eqnarray*}
Next we show that all coordinate planes are locally contactomorphic; we will treat the case of the planes $x=0$ and $t=0$ and leave the other cases as an exercise. We  parametrise the plane $x=0$ by $\sigma(u,v)=(0,u,v)$ and consider the map $f:\{x=0\}\to \{t=0\}$ given by
$$
(0,u,v)\mapsto(uv,v,0).
$$ 
Denote by $\tilde\sigma$ the surface patch $f\circ\sigma$. Then
$$
\omega_\sigma=dv\quad\text{and}\quad\omega_{\tilde\sigma}=-2u^2 dv
$$
which by the contact condition \ref{eq:contact-cond} proves our assertion.

If now 
$
\sigma(u,v)=(x(u,v),y(u,v),t(u,v)
$
is an arbitrary surface patch for $\E$, from regularity we have that at least one of $\partial(x,y)$, $\partial(y,t)$ and $\partial(t,x)$ is different from zero. We may now assume that $\partial(x,y)\neq 0$ and reparametrise if necessary by
$$
\tilde u=x(u,v),\quad \tilde v=y(u,v)
$$
to obtain the regular surface patch $\sigma(\tilde u,\tilde v, t(\tilde u,\tilde v))$ which is a local graph of a function over the complex plane.
\end{proof}

\section{Straight Ruled Surfaces}\label{sec:ruled}

In this section we define straight ruled surfaces in $\h$ and prove Theorem \ref{thm:main}. For the proof, we use two different ways to show that straight ruled surfaces are $H-$minimal; the first one is by showing that they are locally contactomorphic to the complex plane and the second is straightforward.

A straight ruled surface in $\h$ is a surface  which is formed by a union of straight lines (the rulings of the surface),  in the following manner. Suppose that $\gamma=\gamma(s)$, where $s$ lies in an open interval $I$ of $\R$, is a (not necessarily horizontal) smooth curve and $V=V(s)$ is a unit horizontal vector field along $\gamma$, i.e. $V(s)\in{\mathbb H}_{\gamma(s)}(\h)$. For reasons that will be justified below, we assume that the projected curve $pr_\C(\gamma)$ is not a straight line. At any point $q\in\gamma$, say $q=\gamma(s)$ we consider the straight line passing from $q$ in the direction of $V(s)$. Then a point $p$ on the straight line satisfies 
$
p=\gamma(s)+vV(s)
$ 
for some $v$. The {\it straight ruled surface} $\calR(\gamma)$ is the union of all such straight lines, therefore it admits a parametrisation by the (single) surface patch $\sigma:I_s\times\R\to\R^3$ where $I_s$ is an open interval of $\R$ and
$$
\sigma(s,v)=\gamma(s)+vV(s).
$$
If $\gamma=(x,y,t)$ and $V=aX+bY$, $a^2+b^2=1$, we write
\begin{eqnarray*}
\sigma(s,v)&=&\left(\tilde x(s,v),\tilde y(s,v),\tilde t(s,v)\right)\\
&=&\left(x(s)+v a(s),y(s)+v b(s), t(s)+2v(y(s)a(s)-x(s)b(s)\right),
\end{eqnarray*}
and calculate (denoting $d/ds$ by dot)
\begin{eqnarray*}
&&
\tilde x_s=\dot x+v\dot a,\quad \tilde y_s=\dot y+v\dot b,\quad \tilde t_s=\dot t+2v(\dot y a+y\dot a-\dot x b-x\dot b)\\
&&
\tilde x_v=a,\quad \tilde y_v=b,\quad \tilde t_v=2(ya-xb).
\end{eqnarray*}
Regularity: $\sigma$ has to be a regular surface patch. Set
$$
\delta(s)=(a(s),b(s),2\left(y(s)a(s)-x(s)b(s)\right).
$$
Since $\sigma_s=\dot\gamma+v\dot\delta$ and $\sigma_v=\delta$, $\sigma$ is regular if $\dot\gamma+v\dot\delta$ and $\delta$ are linearly independent. For example, this happens if
$$
\left(\dot x(s),\dot y(s),\dot t(s)\right)\quad\text{and}\quad\delta(s)
$$
are linearly independent and $v$ is sufficiently small. Thus regularity is assured if $V(s)$ is never tangent to $\gamma$.
\begin{eqnarray*}
&&
\sigma_s=(\dot x+v\dot a)X+(\dot y+v\dot b)Y+\left(\dot t+4v(a\dot y-b\dot x)+2(x\dot y-y\dot x)+2v^2(a\dot b-b\dot a)\right)T,\\
&&
\sigma_v=aX+bY=V,
\end{eqnarray*}
and
$$
\left(\sigma_s\wedge^h\sigma_v\right)^h=\eta\left(-bX+aY\right)=\eta\mathbb{J}V,
$$ 
where
$$
\eta=\eta(s,v)=\dot t(s)+2(x(s)\dot y(s)-y(s)\dot x(s))+4v(a(s)\dot y(s)-b(s)\dot x(s))+2v^2(a(s)\dot b(s)-b(s)\dot a(s)).
$$
Thus the characteristic locus is
$$
\mathfrak{C}(\calR(\gamma))=\{(s,v)\in I_s\times I_v:\;\eta(s,v)=0\},
$$
where $I_v$ is an appropriately small open interval of $\R$.
The exceptional case when $\eta$ vanishes identically occurs when the projection $pr_\C(\gamma)$ is a straight line. This can be seen as follows. The function $\eta$  is a quadratic polynomial in $v$ therefore it vanishes identically if and only if the following relations hold simultaneously:
\begin{eqnarray*}
&&
\dot t(s)=2(y(s)\dot x(s)-x(s)\dot y(s)),\\
&&
a(s)\dot y(s)-b(s)\dot x(s)=0,\\
&&
a(s)\dot b(s)-b(s)\dot a(s)=0.
\end{eqnarray*}
From the first relation it follows that $\gamma$ has to be horizontal; from the second we have that $V$ is parallel to the horizontal tangent $\dot\gamma=\dot xX+\dot yY$ and since $V$ has been supposed to be unit, we have $V=\pm\dot \gamma$. Then the third relation reads
$$
\pm(\dot x\ddot y-\dot y\ddot x)=0.
$$
But the left hand side is (up to sign) equal to the signed curvature of the projected curve $pr_\C(\gamma)$. Hence  $pr_\C(\gamma)$ has to be a straight line, which contradicts our assumptions for $\calR(\gamma)$. (Note that in this case there is no surface defined). Another special case occurs when $\gamma$ is horizontal; then $\dot t+2(x\dot y-y\dot x)=0$ and thus the characteristic locus includes all points of $\gamma$. 

\medskip

\begin{prop}\label{prop:straight-contact}
Any straight ruled surface $\calR(\gamma)$ is locally contactomorphic to the complex plane $\C$ and thus is $H-$minimal.
\end{prop}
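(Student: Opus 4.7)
The plan is to exploit the structural fact, already built into the definition of a straight ruled surface, that each ruling is a horizontal straight line. I would first record that $\sigma_v$ equals $V=aX+bY$ evaluated at $\sigma(s,v)$; this is precisely the identity $\sigma_v=aX+bY=V$ that appears in the paragraph preceding the statement. Consequently the $v$-curves of $\sigma$ are horizontal and $\sigma_v\in\ker\omega$ identically. Pulling the contact form $\omega$ back through $\sigma$ therefore annihilates the coefficient of $dv$, and I would check that the $ds$-coefficient is precisely the polynomial $\eta(s,v)$ already computed, giving
\begin{equation*}
\omega_\sigma=\eta(s,v)\,ds.
\end{equation*}

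To produce the contactomorphism with $\C$, I would exhibit an explicit local map of the parameter domain $I_s\times I_v$ into $\C$ whose pullback of the induced $1$-form on $\C$ is likewise proportional to $ds$. A convenient choice is
\begin{equation*}
f(s,v)=\bigl(e^{v},\,s\,e^{v},\,0\bigr),
\end{equation*}
viewed as a map into the coordinate plane $\C=\{t=0\}\subset\h$; its Jacobian is $-e^{2v}$, so $f$ is a local diffeomorphism, and a direct computation using $\omega_\C=2u\,dw-2w\,du$ in the natural $(u,w)$-coordinates on $\C$ yields $f^{*}\omega_\C=2e^{2v}\,ds$. Setting
\begin{equation*}
\lambda(s,v)=\frac{2e^{2v}}{\eta(s,v)},
\end{equation*}
which is smooth and nowhere zero on the complement of $\mathfrak{C}(\calR(\gamma))$, I obtain the contactomorphism condition $f^{*}\omega_\C=\lambda\,\omega_\sigma$ of Definition \ref{def:contact}. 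Hence $f\circ\sigma^{-1}$ is a local contactomorphism between $\calR(\gamma)$ and its image in $\C$.

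The $H$-minimality conclusion then follows at once from Proposition \ref{prop:contact-minimal}. I do not anticipate any serious obstacle: everything rests on the single structural observation that $\sigma_v$ is horizontal, which forces $\omega_\sigma$ to be proportional to $ds$; once this is in place the choice of $f$ and the verification of the contact condition are routine.
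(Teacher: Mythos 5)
Your proof is correct and follows the same overall strategy as the paper: first establish $\omega_{\calR(\gamma)}=\eta\,ds$ (your justification via $\sigma_v=V\in\ker\omega$ is exactly right), then exhibit an explicit map into $\C$ whose pullback of $\omega_\C$ is also a multiple of $ds$. The only genuine difference is the choice of that map. The paper parametrises $\C$ by $\tilde\sigma(s,v)=(a(s)v,b(s)v,0)$, which mirrors the ruling structure (lines through the origin) and yields $\lambda=2v^2(a\dot b-b\dot a)/\eta$; your map $f(s,v)=(e^v,se^v,0)$ yields $\lambda=2e^{2v}/\eta$. Your choice is actually more robust: the paper's $\tilde\sigma$ fails to be an immersion wherever $v(a\dot b-b\dot a)=0$ (in particular it degenerates to a curve when the direction field $V$ is constant, and its $\lambda$ vanishes along $v=0$), whereas your Jacobian $-e^{2v}$ and your $\lambda$ are nowhere zero away from $\mathfrak{C}(\calR(\gamma))$, and your image avoids the characteristic point of $\C$ at the origin. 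What you lose is the geometric transparency of the paper's picture of $\C$ as a straight ruled surface ruled by lines through $0$; what you gain is a contact factor $\lambda$ that is genuinely nonvanishing, which is what Definition \ref{def:contact} really wants.
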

\begin{proof}
We only have to prove our first statement; the second follows from Proposition \ref{prop:contact-minimal}.
Let $\gamma=(x,y,t)$ and $V=aX+bY$ as before and also
$$
\sigma(s,v)=(\tilde x(s,v),\tilde y(s,v),\tilde t(s,v))
$$
be the surface patch for $\calR(\gamma)$.  Then
\begin{eqnarray*}
\omega_{\calR(\gamma)}&=&\sigma^*\omega\\
&=&(\tilde t_s+2\tilde x\tilde y_s-2\tilde y\tilde x_s)ds+(\tilde t_v+2\tilde x\tilde y_v-2\tilde y\tilde x_v)dv\\
&=&\eta ds.
\end{eqnarray*}
We now consider the following local parametrisation for $\C$:
$$
\tilde\sigma(s,v)=(a(s)v,b(s)v,0).
$$
Under this parametrisation, $\C$ is trivially a straight ruled surface; the curve $\gamma$ is the single point $(0,0,0)$, and the horizontal flow comprises of the straight lines passing through the origin.
Then
\begin{eqnarray*}
&&
\omega_{\C}=\tilde\sigma^*\omega
=2v^2(a\dot b-b\dot a)ds
=(2v^2(a\dot b-b\dot a)/\eta) \omega_{\calR(\gamma)}.
\end{eqnarray*}

\end{proof}

\begin{rem} Here is a straightforward proof of $H-$minimality of straight ruled surfaces. The unit horizontal vector field is
$$
\nu=\nu_1X+\nu_2Y=\pm(bX-aY).
$$
We suppose first that $\nu_1=b$ and $\nu_2=-a$; the other case is treated similarly. We find
$$
(\nu_1)_s=\dot b,\quad (\nu_1)_v=0,\quad (\nu_2)_s=-\dot a,\quad (\nu_2)_v=0.
$$
Using Proposition \ref{prop:main} we have at non characteristic points
$$
H^h=\frac{\partial(\nu_1,\tilde y)+\partial(\tilde x,\nu_2)}{\partial(\tilde x,\tilde y)}=\frac{b\dot a+a\dot b}{b\dot x-a\dot y+v(b\dot a-a\dot b)}=0,
$$
since $a^2+b^2=1$.



We  also stress here that it is geometrically clear that the parametric lines  $s={\rm const.}$ are the horizontal flow of $\calR(\gamma)$. This can also be seen by solving the system of equations \ref{hor-sur-cur2} and \ref{hor-sur-cur3} to obtain $\beta=0$.
\end{rem}

\noindent{\it Proof of the Main Theorem \ref{thm:main}.}
The first statement of the Theorem follows from Proposition 
 \ref{prop:straight-contact}. For the second statement, let first $\E$ be a regular surface and $p\in\E$ be a non characteristic point. Since the horizontal flow foliates $\E$ by horizontal surface curves $\gamma_s$ of unit horizontal speed tangent to $\mathbb{J}\nu_\E$, $s\in I$,  consider the integral curve $\gamma_{s_0}(v)$ passing from $p$, where $v$ lies in a sufficiently small interval: $\gamma_{s_0}(v_0)=p$ for some $v_0$ in that interval. There exists an open subset $U$ of $\R^2$, with $(s_0,v_0)\in U$ and a smooth mapping $\sigma:U\to\E$ so that 
$$
\sigma(s,v)=\gamma_s(v),\quad (s,v)\in U
$$
and we may shrink $U$ so that it does not contain any characteristic points. Suppose now that $\E$ has zero horizontal mean curvarure; by Proposition \ref{prop:mean-sec}, the curves $pr_\C(\gamma_s)$ have zero signed curvature, therefore they are pieces of straight lines. It follows that if
$$
\sigma(s,v)=(x_s(v),y_s(v), t_s(v)),
$$
then we have $d^2x_s/dv^2=d^2y_s/dv^2=0$ and thus
$$
x_s(v)=a(s)v+x(s),\quad y_s(v)=b(s)v+y(s),
$$
for some smooth functions $x,y,a,b$. Since $\gamma_s$ has unit horizontal speed, we have $a^2+b^2=1$ and since it is horizontal, we also have
\begin{eqnarray*}
 \frac{dt_s}{dv}&=&2\left(y_s\frac{dx_s}{dv}-x_s\frac{dy_s}{dv}\right)\\
 &=&2\left((y+b)a-(x+a)b\right)\\
 &=&2(ya-xb),
\end{eqnarray*}
and therefore $t_s(v)=2v\left(y(s)a(s)-x(s)b(s)\right)+t(s)$, where $t(s)$ is a smooth function of $s$. Therefore the patch $\sigma$ above is a patch of a piece of a straight ruled surface. Since our point $p$ is arbitrary, we conclude  the Theorem. \hfill$\Box$

\section{Regular Surfaces in $\h$ with Empty Characteristic Locus}\label{sec:empty}
In this section we give two examples of regular surfaces $\E$ with empty $\mathfrak{C}(\E)$. First, we examine horizontal tangent developables which comprise of the counterparts of tangent developables in the Euclidean case. Secondly, we show that surfaces with empty characteristic locus and closed induced $1-$form can be only generalised cylinders which have constant horizontal mean curvature. An arbitrary generalised cylinder is not a straight ruled surface; this happend only in the case of a plane orthogonal to $\C$. Two indicative examples of surfaces are given in the end of this section. The first, that of the hyperbolic paraboloid shows that there exists a developable Euclidean surface with negative Gaussian curvature which is also a straight ruled surface. The second, that of the cone, shows that a Euclidean cone, although having empty characteristic locus and zero Gausian curvature, can not be a straight ruled surface.

We start with the following proposition which is a direct consequence of Proposition \ref{prop-nu-nuh}. 

\medskip

\begin{prop}\label{prop-locus-closed}
Let $\E$ be an oriented regular surface curve. Then the following are equivalent:
\begin{enumerate}
 \item The characteristic locus $\mathfrak{C}(\E)$ of $\E$ is the null set.
\item The induced 1--form $\omega_\E$ is nowhere zero.
\item The characteristic locus $\mathfrak{C}(\E)$ of $\E$ is the null set, if and only if the horizontal flow has no singularities. 
\end{enumerate}
\end{prop}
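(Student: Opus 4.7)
The plan is to assemble this equivalence directly from results already established in Sections \ref{sec:surfaces} and \ref{sec:form}, since all the pieces are in place; the proposition is essentially a restatement that gathers three equivalent viewpoints on the characteristic locus.

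First I would handle the equivalence $(1)\Longleftrightarrow(2)$. This is immediate from Proposition \ref{prop-omega-locus}, which characterises $\mathfrak{C}(\E)$ as exactly the zero set of $\omega_\E$. Hence $\mathfrak{C}(\E)=\emptyset$ if and only if $\omega_\E$ is nowhere vanishing, and no further computation is needed. Alternatively, one may note the analogue through Proposition \ref{prop-nu-nuh}: a point $p$ is non characteristic if and only if $N_p\cdot N_p^h\neq 0$, which in the local expression \ref{defn-hornor} is exactly the condition that not both components $(t_u-2yx_u+2xy_u)$ and $(t_v-2yx_v+2xy_v)$ defining $\omega_\sigma$ vanish.

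Next I would establish $(1)\Longleftrightarrow(3)$. By the Corollary following Proposition \ref{prop-nu-nuh}, the unit horizontal normal $\nu^h_\E$ is well defined and nowhere vanishing exactly on the complement of $\mathfrak{C}(\E)$, and therefore so is $\mathbb{J}\nu^h_\E$. By Proposition \ref{integrability-Jn} and Definition \ref{horflow}, the horizontal flow is the foliation tangent to $\mathbb{J}\nu^h_\E$, whose singularities are precisely the points where this vector field fails to be defined, i.e.\ the characteristic points. Hence the flow has no singularities if and only if $\mathfrak{C}(\E)$ is empty.

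There is no genuine obstacle here; the statement is a compilation of earlier results. The only care needed is in phrasing item (3) so that it is parallel to (1) and (2): that is, to read it as the assertion ``the horizontal flow has no singularities,'' which then slots cleanly into the chain $(1)\Longleftrightarrow(2)\Longleftrightarrow(3)$ via the two short arguments above.
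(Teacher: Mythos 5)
Your proof is correct and takes essentially the same route as the paper, which offers no separate argument but simply declares the proposition a direct consequence of the earlier results (the characterisation of $\mathfrak{C}(\E)$ as the zero set of $\omega_\E$ in Proposition \ref{prop-omega-locus}, and the fact that the horizontal flow tangent to $\mathbb{J}\nu^h_\E$ is singular exactly at characteristic points). Your write-up supplies exactly the details the paper leaves implicit, including the sensible rereading of item (3) as ``the horizontal flow has no singularities.''
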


\subsection{Horizontal tangent developables} \label{sec:hor-developables}

Let $\gamma$ be a {\it horizontal} curve parametrised so that it is of unit horizontal speed, that is
$$
\gamma(s)=(x(s),y(s),t(s)),\quad \dot t(s)=2(y(s)\dot x(s)-x(s)\dot y(s)),
$$
and $\dot x(s)^2+\dot y(s)^2=1$ for $s$ in an open interval $I$ of $\R$. We also suppose that $\gamma$ is not a straight line; for $\gamma$ horizontal $pr_\C(\gamma)$ is a straight line if and only if $\gamma$ is a straight line. 
The surface $\calT(\gamma)$ of horizontal tangent developables of $\gamma$ is defined by the single surface patch
$$
\sigma(s,v)=\gamma(s)+v\dot\gamma(s).
$$
Regularity: Since
\begin{eqnarray*}
&&
\sigma_s=(\dot x+v\ddot x,\dot y+v\ddot y, \dot t+v\ddot t),\\
&&
\sigma_v=(\dot x,\dot y, \dot t),
\end{eqnarray*}
we have $\sigma_s\wedge\sigma_v=v\ddot\gamma\wedge\dot\gamma$. Hence, in the first place, the (usual) curvature $\kappa(\gamma)$ of $\gamma$ has to be positive everywhere. Since $\gamma$ is horizontal,
$$
\kappa(\gamma)=\left|(\ddot x,\ddot y, 2(y\ddot x-x\ddot y)\right|,
$$
which vanishes only if $\ddot x=\ddot y=0$, i.e.  only if $\gamma$ is a straight line. Moreover, we have to exclude the points of $\gamma$ since at these points $v=0$.

Thus defined, $\calT(\gamma)$ is a special case of a straight ruled surface (here $V(s)=\dot\gamma(s)$ the unit horizontal tangent of $\gamma$) and therefore it is locally contactomorphic to the plane $\C$ and  has vanishing horizontal mean curvature. Note that the characteristic locus of $\calT(\gamma)$ is empty, since we have assumed regularity for  $\calT(\gamma)$.   

\subsection{Surfaces with empty characteristic locus and closed induced form}
Below we  trace all regular oriented surfaces $\E$ in $\h$ with empty characteristic locus and with the additional property  that $\omega_\E$ is closed.

\medskip

\begin{prop}\label{cylinders}
Regular  surfaces $\E$ in $\h$ with empty characteristic locus and closed induced 1--form $\omega_\E$ are exactly the  Euclidean generalised cylinders which are obtained by translating a regular curve lying in the complex plane $\C$  along the vector field $T$.
\end{prop}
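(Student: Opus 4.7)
The plan is to work in an arbitrary local parametrisation $\sigma(u,v) = (x(u,v), y(u,v), t(u,v))$ of $\E$ and show that closedness of $\omega_\E$ is equivalent to the identical vanishing of the Jacobian $\partial(x,y)/\partial(u,v)$; then the conclusion will follow from a routine reparametrisation.

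The easy direction is a direct verification. For any regular planar curve $c(s) = (x(s), y(s))$ in $\C$, the cylinder $\sigma(s,\tau) = (x(s), y(s), \tau)$ yields
\begin{equation*}
\omega_\E = 2\bigl(x(s)\, y'(s) - y(s)\, x'(s)\bigr)\, ds + d\tau,
\end{equation*}
which is nowhere vanishing, so $\mathfrak{C}(\E)=\emptyset$ by Proposition \ref{prop-omega-locus}, and manifestly closed. This gives one inclusion.

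For the converse I start from the local formula for $\omega_\E$ displayed just before Proposition \ref{prop-omega-locus} and compute $d\omega_\E$. Writing $\omega_\E = P\, du + Q\, dv$ with $P = t_u + 2xy_u - 2yx_u$ and $Q = t_v + 2xy_v - 2yx_v$, the mixed second partials of $t$, $x$, $y$ cancel by Schwarz, and one obtains
\begin{equation*}
d\omega_\E = (Q_u - P_v)\, du \wedge dv = 4\, \frac{\partial(x,y)}{\partial(u,v)}\, du\wedge dv.
\end{equation*}
Thus the closedness hypothesis is equivalent to $\partial(x,y)/\partial(u,v)\equiv 0$ on every local chart, which says that the planar projection $pr_\C\circ\sigma:U\to\C$ has rank at most one everywhere. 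It cannot have rank zero at any point $p$, since then both $\sigma_u(p)$ and $\sigma_v(p)$ would be vertical multiples of $T$ and therefore linearly dependent, contradicting the regularity of $\sigma$ as a patch in $\R^3$. Hence $pr_\C\circ\sigma$ has constant rank one, and the classical constant rank theorem supplies local coordinates $(\tilde u,\tilde v)$ in which $pr_\C\circ\sigma$ depends only on $\tilde u$; that is, $\sigma(\tilde u,\tilde v) = (x(\tilde u), y(\tilde u), t(\tilde u,\tilde v))$ with $(x'(\tilde u), y'(\tilde u))\neq (0,0)$.

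In these coordinates $\sigma_{\tilde v} = t_{\tilde v}\, T$, and regularity of $\sigma$ forces $t_{\tilde v}\neq 0$ everywhere, so the change of variables $\tau = t(\tilde u,\tilde v)$ is legitimate and puts $\sigma$ into the normal form $\sigma(\tilde u,\tau) = (x(\tilde u), y(\tilde u), \tau)$: precisely a generalised cylinder over the regular planar curve $\tilde u\mapsto (x(\tilde u), y(\tilde u))$. The only subtle point in the whole argument is the exclusion of rank-zero points of $pr_\C\circ\sigma$, and this is handled at once by the rank-two regularity of $\sigma$ in $\R^3$; beyond that, everything is a direct calculation together with a standard application of the constant rank theorem, so I do not anticipate any serious obstacle.
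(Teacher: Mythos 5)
Your proof is correct and follows essentially the same route as the paper's: closedness of $\omega_\E$ forces $\partial(x,y)/\partial(u,v)\equiv 0$, hence the projection to $\C$ degenerates to a curve and a reparametrisation puts the patch into the cylinder normal form $(x(\tilde u),y(\tilde u),\tau)$. If anything you are more careful than the paper at the reparametrisation step: the paper asserts that $\sigma_u\wedge\sigma_v\perp T$ is equivalent to one of $\sigma_u,\sigma_v$ being a multiple of $\partial_t$ (which is not literally true pointwise before changing coordinates), whereas your combination of the constant rank theorem with the rank-two regularity argument excluding rank-zero points makes that step airtight.
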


\begin{proof}
If $\sigma:U\to\E$, $\sigma=(x,y,t)$ is an arbitrary surface patch for $\E$, then $d\omega_\E=0$ induces $\partial(x,y)=0$. From Proposition \ref{prop-locus-closed} we see that if $\E$ is such a surface, then for every parametrisation $\sigma$ we have
$$
\sigma_u\times\sigma_v\perp T=\frac{\partial}{\partial t}
$$ 
as vectors in $\R^3$. But this is equivalent to say that either $\sigma_u=\rho(u,v)\partial_t$ or $\sigma_v=\rho^*(u,v)\partial_t$ where $\rho$ and $\rho^*$ are smooth functions of $(u,v)$. Suppose the first holds; the second case is treated analogously. We obtain
$$
\sigma(u,v)=\left(x(v),y(v),\int_{u_0}^u\rho(\xi,v)d\xi\right),
$$
and we may reparametrise by 
$$
\tilde u=\int_{u_0}^u\rho(\xi,v)d\xi,\quad\tilde v=v,
$$
to obtain
$$
\sigma(\tilde u,\tilde v)=(x(\tilde v),y(\tilde v),\tilde u).
$$
Since $\E$ is regular, condition $\sigma_{\tilde u}\wedge\sigma_{\tilde v}=(\dot y\rho,\dot x\rho,0)\neq 0$, where the dot stands for $d/d\tilde v$, is equivalent to that the curve $\gamma(\tilde v)=(x(\tilde v),y(\tilde v),0)$ is regular.
Thus
$$
\sigma(\tilde u,\tilde v)=\gamma(\tilde v)+\tilde u\partial_t.
$$
The proof is complete.
\end{proof}

\medskip

\begin{prop}\label{prop:closed}
The only regular  surfaces $\E$ in $\h$ with empty characteristic locus, closed induced $1-$form $\omega_\E$ and constant horizontal mean curvature are
\begin{enumerate}
 \item the planes which are perpendicular to $\C$; these have $H^h\equiv 0$ and
\item the right cylinders whose profile curve is a circle of radius $R$; these have  $H^h\equiv 1/R$. 
\end{enumerate}
\end{prop}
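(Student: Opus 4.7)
The plan is to bootstrap Proposition \ref{cylinders} with the geometric interpretation of $H^h$ afforded by Proposition \ref{prop:mean-sec}. By Proposition \ref{cylinders}, the surface $\E$ is locally a vertical cylinder
$$
\sigma(\tilde u,\tilde v)=\gamma(\tilde v)+\tilde u\,\partial_t,
$$
over a regular plane curve $\gamma(\tilde v)=(x(\tilde v),y(\tilde v),0)\subset\C$; I may (and will) assume $\gamma$ is arc-length parametrized, so that $\dot x^2+\dot y^2=1$. A direct application of \eqref{defn-hornor} yields $N^h=-\dot y\,X+\dot x\,Y$, which has unit Heisenberg norm, whence $\nu_\E^h=-\dot y X+\dot x Y$ and $\J\nu_\E^h=-\dot x X-\dot y Y$.

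Next I would identify the leaves of the horizontal flow of $\E$. Writing a flow curve as $s\mapsto\sigma(\tilde u(s),\tilde v(s))$ and imposing $\sigma_*(\dot{\tilde u},\dot{\tilde v})=\J\nu_\E^h$, the $X,Y$ components force $\dot{\tilde v}=-1$, while the $T$ component merely determines $\dot{\tilde u}$ via the horizontality condition. The payoff is that $pr_\C$ applied to any such leaf is precisely $\gamma$ traversed in reverse at unit speed, so by Proposition \ref{prop:mean-sec},
$$
H^h(p)\;=\;\pm\,\kappa_s(\gamma)\bigl(pr_\C(p)\bigr),
$$
where $\kappa_s(\gamma)$ is the classical signed curvature of $\gamma$ regarded as a planar curve in $\C$.

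The constant-$H^h$ hypothesis thus reduces to constancy of $\kappa_s(\gamma)$ on its domain, and the classification of planar curves with constant signed curvature is standard: such a $\gamma$ is either a piece of a straight line, yielding $\kappa_s\equiv 0$, or an arc of a circle of radius $R$, yielding $|\kappa_s|\equiv 1/R$. Translating this back, $\E$ is either a plane perpendicular to $\C$ with $H^h\equiv 0$, or a right circular cylinder of radius $R$ with $H^h\equiv 1/R$ (after selecting the compatible orientation), recovering exactly the two families listed in the statement.

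The only delicate point I foresee is the sign bookkeeping: the choice of unit horizontal normal $\nu_\E^h$ and the orientation of $\gamma$ each contribute a sign, and they must be aligned so that the value of $H^h$ on the circular cylinder comes out as $+1/R$ rather than $-1/R$. Once that is handled, the argument is a direct consequence of the two propositions cited together with classical planar geometry, and I anticipate no genuine obstacle.
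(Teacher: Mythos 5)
Your proposal is correct and follows essentially the same route as the paper: reduce via Proposition \ref{cylinders} to a generalised cylinder over an arc-length parametrised plane curve $\gamma$, compute $\nu_\E^h$ and $\J\nu_\E^h$ to see that the horizontal flow consists of horizontal lifts of $\gamma$, and then invoke Proposition \ref{prop:mean-sec} to identify $H^h$ with the signed curvature of $\gamma$, so that constancy of $H^h$ forces $\gamma$ to be a line or a circle. Your explicit attention to the sign/orientation bookkeeping is a welcome refinement of the paper's terser argument.
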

\begin{proof}
Let
$$
\sigma(u,v)=(x(v),y(v),u)
$$
a generalised cylinder. Since $\gamma(v)$ is regular we may reparametrise so that it has unit speed, $\dot x(v)^2+\dot y(v)^2=1$. Then,
\begin{equation*}
\nu^h=\dot y X-\dot x Y,\quad \mathbb{J}\nu^h=\dot x X+\dot y Y,
\end{equation*}
and therefore the horizontal flow is comprising of all horizontal lifts of $\gamma$. Thus
$
H^h=\kappa_s(\gamma),
$
where $\kappa_s(\gamma)$ is the signed curvature of $\gamma$. 
\end{proof}

\subsection{Two examples: Euclidean Ruled Surfaces vs. Straight Ruled Surfaces}

\subsubsection*{Hyperbolic paraboloid} The hyperbolic paraboloid $z=y^2-x^2$ is a doubly ruled surface in the usual sense and a straight ruled surface $\calR(\gamma)$ where $\gamma$ is the parabola $z=y^2$:
$$
\sigma(s,v)=(0,s,s^2)+\frac{1}{\sqrt{2}}v(X+Y).
$$
Its characteristic locus is the plane $x+y=0$. Recall that as a surface in $\R^3$ it has negative Gaussian curvature; however, since it is a straight ruled surface in $\h$ it has zero horizontal mean curvature.

\subsubsection*{Cone} The cone $x^2+y^2=z^2$ is a Euclidean ruled surface with zero Gaussian curvature. On the other hand, as a regular surface in $\h$ it has empty characteristic locus (the origin is not a regular point for the cone) and non zero (actually non constant) horizontal mean curvature. To see this, parametrise the lower part of the cone by
$$
\sigma(u,v)=(u\cos v,u\sin v, u), \quad u<0,\; v\in (0,2\pi).
$$
One finds
$$
\nu^h=\frac{(\cos v-2u\sin v)X+(\sin v+2u\cos v)}{(1+4u^2)^{1/2}}
$$
and
$$
H^h=\frac{1}{u(1+4u^2)^{3/2}}.
$$
Thus it is not a straight ruled surface in $\h$.

\end{document}